\crefname{section}{section}{sections}
\crefname{subsection}{subsection}{subsections}
\Crefname{section}{Section}{Sections}
\Crefname{subsection}{Subsection}{Subsections}
\DeclareRobustCommand*{\bbl@ap}[1]{\textormath{\textsuperscript{#1}}{^{\mathrm{#1}}}}%
\DeclareRobustCommand*{\bbl@ped}[1]{\textormath{$_{\mbox{\fontsize\sf@size\z@ \selectfont#1}}$}{_\mathrm{#1}}}%
\let\ap\bbl@ap
\let\ped\bbl@ped
\newtheorem{theorem}{Theorem}[section]
\newtheorem{proposition}[theorem]{Proposition}
\newtheorem{lemma}[theorem]{Lemma}
\newtheorem{assumption}[theorem]{Assumption}
\newtheorem{remark}[theorem]{Remark}
\newtheorem{corollary}[theorem]{Corollary}
\newcommand{\takeout}[1]{} 
\newcommand{\Oe}{\Omega_{e}}
\newcommand{\Oi}{\Omega_{i}}
\newcommand{\pOi}{\partial\Omega_{i}}
\newcommand{\pOe}{\partial\Omega_{e}}
\newcommand{\mO}{\Omega}
\newcommand{\mOe}{\Omega_{e}}
\newcommand{\me}{e}
\newcommand{\mOip}{\Omega_{i}^{\prime}}
\newcommand{\bsA}{\boldsymbol{A}} 
\newcommand{\bsR}{\boldsymbol{R}} 
\newcommand{\bsE}{\boldsymbol{E}}
\newcommand{\bsI}{\boldsymbol{I}}
\newcommand{\bsM}{\boldsymbol{M}} 
\newcommand{\bsP}{\boldsymbol{P}}
\newcommand{\bsS}{\boldsymbol{S}}
\newcommand{\bsT}{\boldsymbol{T}}
\newcommand{\bsu}{\boldsymbol{u}} 
\newcommand{\bsv}{\boldsymbol{v}} 
\newcommand{\bsf}{\boldsymbol{f}}
\newcommand{\pOmega}{\Omega^{\prime}} 
\definecolor{petrol}{RGB}{0,110,137}
\definecolor{lila}{RGB}{108,0,228}
\newcommand{\dist}{\operatorname{dist}} 
\newcommand{\spanlin}{\operatorname{span}}
\definecolor{grau}{RGB}{174,170,153}
\DeclareMathOperator*{\argmin}{arg\,min}
\newcommand{\lila}[1]{{\color{black} #1}}
\def\AH{\color{black}}
\DeclareMathOperator{\diag}{diag}
\newcommand{\R}{\mathbb{R}} 
\title[A fully algebraic Schwarz method]{A fully algebraic and robust two-level Schwarz method based on optimal local approximation spaces}
\author{Alexander Heinlein}
\address{Delft Institute of Applied Mathematics, Delft University of Technology, The Netherlands. {a.heinlein@tudelft.nl}}
\author{Kathrin Smetana}
\address{Department of Mathematical Sciences, Stevens Institute of Technology, 1 Castle Point Terrace, Hoboken, NJ 07030, United States of America. {ksmetana@stevens.edu}
}
\date{\today}
\subjclass[2010]{65F08, 65F10, 65N55, 65N30, 68W10}
\keywords{Domain decomposition methods, multiscale methods, overlapping Schwarz preconditioner, adaptive coarse spaces}
\begin{document}
	
	\begin{abstract}
Two-level domain decomposition {\AH preconditioners} lead to {\AH fast} convergence and scalability of iterative solvers. However, for highly heterogeneous problems, \lila{where the coefficient function is varying rapidly on several possibly non-separated scales}, the condition number {\AH of the preconditioned system} generally depends on the contrast \lila{of the coefficient function} leading to a deterioration of convergence. Enhancing the methods by coarse spaces  constructed from suitable local \lila{eigenvalue} problems, also denoted as adaptive or spectral coarse spaces, restores robust, contrast-independent convergence. However, these eigenvalue problems typically rely on non-algebraic information, such that the adaptive coarse spaces cannot be constructed from the fully assembled \lila{system matrix}. 

\lila{{\AH In this paper, a novel algebraic adaptive coarse space, which relies} on the $a$-orthogonal decomposition of (local) finite element (FE) spaces into functions that solve the partial differential equation (PDE) with {\AH some} trace and FE functions that are zero on the boundary, is proposed. {\AH In particular, the basis is constructed from eigenmodes of two types of local}
%we propose building the adaptive coarse space from {\AH the eigenmodes of two types of} local 
eigenvalue problems {\AH associated with the edges of the domain decomposition}. To approximate functions that solve the PDE locally, we employ a transfer eigenvalue problem, which has originally been proposed for the construction of optimal local approximation spaces for multiscale methods. In addition, we make use of a Dirichlet eigenvalue problem that is a slight modification of the Neumann eigenvalue problem used in the adaptive generalized Dryja--Smith--Widlund (AGDSW) coarse space. Both eigenvalue problems rely solely on local Dirichlet matrices, which can be extracted from the fully assembled system matrix. By combining arguments from multiscale and domain decomposition methods we derive a contrast-independent upper bound for the condition number. }

The robustness of the method is confirmed {\AH numerically} for a variety of {\AH heterogeneous} coefficient distributions, including binary random distributions and a coefficient function \lila{constructed from the SPE10 benchmark}. The results are comparable to those of the non-algebraic AGDSW coarse space, also for those cases where {\AH the convergence of the classical} \lila{algebraic} generalized Dryja--Smith--Widlund (GDSW) coarse space {\AH deteriorates}. Moreover, \lila{the} coarse space dimension is the same or comparable to the AGDSW coarse space for all numerical experiments.
	\end{abstract}
	
	\maketitle

\section{Introduction}\label{sect:intro}

Domain decomposition methods {\AH (DDMs)} are a popular class of methods that yield rapid convergence in the iterative solution of \lila{linear systems of equations} arising from partial differential equations (PDEs). 
In particular, if a suitable coarse level is used, {\AH DDMs} have proved to be scalable for a wide range of problems, which, for elliptic problems, can be shown theoretically by proving \lila{an upper bound} for the condition number.

Unfortunately, in presence of strong heterogeneities in certain problem parameters, {\AH the} convergence of classical {\AH DDMs} may deteriorate. For instance, for a diffusion problem with a coefficient function \lila{that} is varying rapidly on possibly several non-separated scales, the condition number may depend on the contrast of the maximum and minimum {\AH values of the} \lila{coefficient.} One way to overcome this issue is by using adaptive coarse spaces, also known as spectral coarse spaces. These approaches are based on solving local generalized eigenvalue problems and selecting a number of eigenfunctions based on a user-chosen \lila{tolerance} for the eigenvalues. The selected functions are used to construct coarse basis functions with local support. Due to the use of spectral information, these coarse spaces typically yield a \lila{provable upper bound of the condition number that is independent of the contrast and depends on the tolerance for the eigenvalues.} Hence, adaptive coarse spaces yield robust convergence. A variety of adaptive coarse spaces has been introduced for nonoverlapping {\AH DDMs}~\cite{MS07,SR13,KRR15a,KKR15,ZAM16,BPS17,OWZD17,CW16,KCW17,PD17}, 
most of which consider finite element tearing and interconnect -- dual primal (FETI--DP)
and balancing domain decomposition by constraints (BDDC)
methods, and overlapping Schwarz methods~\cite{Galvis:2010:Domab,Galvis:2010:Domaa,Dolean:2012:Anala,Spillane:2014:Absta,Heinlein:2018:Multb,Heinlein:2018:AGD,Heinlein:2019:AGD,Heinlein:2020:AGD,Eikeland:2018:Overa,Gander:2015:Anala,knepper:2022:dissertation,Bastian:2022:MSD}. 

Even though provably robust, none of the approaches referenced above is algebraic. {\AH This} means that {\AH none of the}
above-mentioned preconditioners 
{\AH can}
be constructed from the fully assembled system matrix \lila{therefore requiring new assembly routines and potentially even access to the mesh of the FE discretization}. FETI-DP and BDDC {\AH methods} are generally not algebraic since they require local Neumann matrices on the subdomains, which cannot be extracted from \lila{the system matrix}. Schwarz methods can be constructed algebraically if the coarse space can be constructed algebraically. However, the adaptive coarse spaces mentioned above all require additional information for the definition of the eigenvalue problems, such as local Neumann matrices or geometric information. 

\lila{In this paper, we propose, to the best of our knowledge for the first time, an {\AH interface}-based adaptive coarse space for two-level overlapping Schwarz methods that is both fully algebraic and robust. Relying on the well-known $a$-orthogonal decomposition of local FE spaces into functions that solve the PDE numerically with 
{\AH a prescribed trace}
as a boundary condition and FE functions that are zero on the boundary, we propose in this paper building the adaptive coarse space from two local eigenvalue problems {\AH associated with} each edge of the domain decomposition. To approximate functions that solve the PDE locally, we employ the transfer eigenvalue problem introduced in~\cite{SmePat16}, which is known from the construction of optimal local approximation spaces \cite{BabLip11, SmePat16, MaScDo22, schleuss2022optimal} for a novel type of multiscale methods that allows full error control even for heterogeneous problems with non-separated scales \cite{BabLip11, MalPet14, MaScDo22, OwZ11,OZB13, Owh17, SmePat16}. For the approximation of the functions with zero trace we make use of a Dirichlet eigenvalue problem which is a slight modification of the Neumann eigenvalue problem used in the non-algebraic adaptive generalized Dryja--Smith--Widlund (AGDSW)~\cite{Heinlein:2018:AGD,Heinlein:2019:AGD,Heinlein:2020:AGD,knepper:2022:dissertation} coarse space. The adaptive coarse space is then build from energy-minimizing extensions of the eigenfunctions. Our new method is algebraic as both eigenvalue problems rely solely on local Dirichlet matrices, which can be extracted from the fully assembled system matrix. 
We show that using and combining arguments from these novel type of multiscale methods and DDMs allows deriving a contrast-independent upper bound for the condition number. 

In~\cite{KorYse16,KoPeYs18}, techniques from Schwarz methods have been used to develop and analyze such novel type of multiscale methods, highlighting a connection between the latter and certain DDMs.

The adaptive coarse space proposed in this paper} belongs to a class of adaptive coarse spaces which first partition the interface into nonoverlapping components and compute the eigenvalue problems on these components; cf.\ the spectral harmonically enriched multiscale (SHEM)~\cite{Gander:2015:Anala,Eikeland:2018:Overa}, overlapping Schwarz -- approximate component synthesis (OS-ACMS)~\cite{Heinlein:2018:Multb}, and AGDSW coarse spaces. All these approaches yield a minimum number of total degrees of freedom in all local generalized eigenvalue problems since no degree of freedom appears in more than one eigenvalue \lila{problem. Our new method} is most closely related to the AGDSW method. Even though the AGDSW coarse space contains the generalized Dryja--Smith--Widlund (GDSW) coarse space~\cite{Dohrmann:2008:DDL,Dohrmann:2008:FEM}, which can be constructed algebraically, it is not algebraic since local Neumann matrices appear in the eigenvalue problems. 

\lila{Algebraic coarse spaces for overlapping Schwarz methods have recently, and in parallel to the {\AH preparation of this manuscript,} 
also been proposed} by Gouarin and Spillane~\cite{Gouarin:2021:FAD,Spillane:2021:TNF} as well as {\AH Al Daas et al.}~\cite{al2019class,Daas:2021:RAM,daas2022efficient}. In both cases, the methods can be seen as extensions of the generalized eigenproblems in the overlaps (GenEO)~\cite{Spillane:2014:Absta} approach, where local eigenvalue problems in the {\AH overlaps or the overlapping subdomains} of the 
Schwarz method are solved to compute the coarse space. 
{\AH In order to obtain algebraic coarse spaces, the authors mostly focus on general linear algebra arguments, such as matrix splittings and the Sherman–Morrison–Woodbury formula; see also~\cite{spillane2021abstract,agullo2019robust} for abstract descriptions of the GenEO framework. In contrast, our approach is based on a priori knowledge about the elliptic PDE.}
We conjecture that the proposed methodology can be readily extended to other elliptic problems such as linear elasticity, parabolic problems, and higher spatial dimensions{\AH; see also the extensions of the related AGDSW method~\cite{Heinlein:2019:AGD,Heinlein:2020:AGD}.}

Note that algebraic multigrid (AMG) methods~\cite{ruge1987algebraic} are another famous class of algebraic solvers for {\AH linear systems of equations}, and spectral information has also been used to improve their robustness, for instance, in the spectral element-based algebraic multigrid ($\rho$AMGe) method~\cite{Chartier:2003:SAM}.

{\AH The paper is organized as follows:} We propose adaptive coarse spaces for DDMs that are fully algebraic in \cref{sect:new method} and derive a bound for the \lila{condition number in \cref{sect:bound cond number}}. Beforehand, we briefly {\AH introduce our heterogeneous diffusion model problem in~\cref{sect:problem} and} review two-level overlapping Schwarz preconditioner in \cref{subsect:sect:two-level Schwarz} and adaptive coarse spaces in \cref{subsect:adaptive coarse spaces}. In particular, we also elaborate in \cref{sect:Motivation} on the challenges that arise when one wishes to construct adaptive coarse spaces without relying on local discrete variational problems with Neumann boundary conditions that would require new assembly routines on the local subdomains. \lila{Finally, we discuss the computational realization of the proposed method in \cref{sec:implementation} and demonstrate its robustness numerically in \cref{sec:results}.}

\section{Problem Setting}\label{sect:problem}

Let $\Omega\subset\mathbb{R}^2$ be a bounded domain with Lipschitz boundary and $\alpha \in L^{\infty}(\Omega)$ with $0 < \alpha_{min} \leq \alpha \leq \alpha_{max} < \infty$ be a highly heterogeneous coefficient function, possibly with high jumps. We consider the variational problem: 
\begin{align} 
\text{Find } &\mathcal{u} \in H_0^1(\Omega)	: \qquad a_{\Omega}(\mathcal{u},\mathcal{v}) = \mathcal{f}(\mathcal{v}) \qquad \forall \mathcal{v} \in H_0^1(\Omega),\label{eq:var}\\
\nonumber \mbox{where\quad}
	a_{\Omega}( \mathcal{u}, \mathcal{v}) &:= \int_\Omega\alpha(x) (\nabla  \mathcal{u}(x))^T \nabla  \mathcal{v}(x)\,dx 
	\quad 
	\mbox{and} 
	\quad	
	 \mathcal{f}( \mathcal{v}) := \int_\Omega  \mathcal{f}(x) \mathcal{v}(x)\,dx,
\end{align}
respectively, and $ \mathcal{f} \in L^2(\Omega)$. We equip $H^{1}_{0}(\Omega)$ with the energy norm $|\mathcal{u}|_{a_{\Omega}}:= ( a_{\Omega}(\mathcal{u},\mathcal{u}) )^{1/2}$. \lila{Due to space limitations, we defer {\AH a discussion of} the treatment of Neumann or mixed boundary conditions to a forthcoming paper.}

Let $\tau_h$ be a quasi-uniform triangulation of $\Omega$ into triangles or quadrilaterals with element size $h$. \lila{To simplify the presentation, we assume that the triangulation resolves the coefficient function, i.e., that $\alpha$ is constant on each element.} Then, we introduce a conforming finite element (FE) space $V_{\Omega}^{0} \subset H_0^1(\Omega)$ of dimension $N_{\Omega}$, \lila{where, for the sake of simplicity, we consider piece-wise linear (P1) or bilinear (Q1) FE spaces. 
%	{\AH on a quasi-uniform triangulation of $\Omega$}. 
We} obtain the following discrete variational problem:
\begin{equation} \label{eq:disc_var}
\text{Find } u \in V_{\Omega}^{0}: \qquad a_{\Omega}(u,v) = f(v) \qquad \forall v \in V_{\Omega}^{0},
\end{equation}
where $f(v):= \int_{\Omega} \mathcal{f}(x) v(x)\, dx$ for $v \in V_{\Omega}^{0}$. The algebraic version of \eqref{eq:disc_var} then reads 
\begin{equation}\label{eq:alg}
\text{Find } \bsu \in \R^{N_{\Omega}}:	 \bsA \bsu =\bsf, \quad \text{where } \bsA \in \R^{N_{\Omega}\times N_{\Omega}}, \bsf \in \R^{N_{\Omega}}.
\end{equation}

\section{Adaptive Coarse Spaces for Two-Level Overlapping Schwarz Preconditioners}\label{sect:two-level Schwarz}

\subsection{Two-Level Overlapping Schwarz Preconditioner}\label{subsect:sect:two-level Schwarz}

Let $\Omega$ be decomposed into nonoverlapping subdomains $\Omega_i$ {\AH with maximal diameter $H$} \lila{such that
$
\bar{\Omega} = \bigcup_{i=1}^{M} \bar{\Omega}_{i},$ $\Omega_{i} \cap \Omega_{j} = \emptyset \text{ for } i \neq j.
$
We} assume that the boundaries of the subdomains are Lipschitz continuous \lila{ and do not intersect any element of $\tau_{h}$}. The domain decomposition interface is given as
\begin{equation} \label{eq:interface}
	\Gamma = \bigcup_{i\neq j} \left( \partial\Omega_i \cap \partial\Omega_j \right) \setminus \partial\Omega.
\end{equation}
Let then $\{\Omega^{\prime}_{i}\}_{i=1}^{M}$ be a corresponding overlapping decomposition of $\Omega$ with overlap $\delta\geq h$. We introduce associated  conforming FE spaces $V_{\pOmega_{i}}^{0} \subset H^{1}_{0}(\pOmega_{i})$, $i=1,\hdots,M$, and introduce operators $R_{\Omega \to \mOip}: V_{\Omega}^{0} \rightarrow V_{\pOmega_{i}}^{0}$ that \emph{restrict} FE functions in $V_{\Omega}^{0}$ to $V_{\pOmega_{i}}^{0}$. The operators $E_{\mOip\to \Omega}: V_{\pOmega_{i}}^{0} \rightarrow V_{\Omega}^{0}$ \emph{extend} FE functions in $V_{\pOmega_{i}}^{0}$ by zero to FE functions in $V_{\Omega}^{0}$ accordingly. Note that the indices of the restriction and extension operators $R_{\Omega \to \mOip}$ and $E_{\mOip\to \Omega}$ here and henceforth show the domain of the respective FE spaces the operators map from and to.\footnote{Very often the operators $R_{\Omega \to \mOip}$ and $E_{\mOip\to \Omega}$ are denoted by $R_{i}$ and $R_{i}^{T}$ in the literature{\AH; see, e.g., \cite{Toselli:2005:DDM})}. As we will be needing also additional restriction and extension operators e.g., for FE spaces on the edges or $\Gamma$, we indicate the domains of the associated FE spaces here.}

Next, we introduce local bilinear forms $a_{\pOmega_{i}}: V_{\pOmega_{i}}^{0} \times V_{\pOmega_{i}}^{0} \rightarrow \R$ and corresponding local stiffness matrices $\bsA_{\pOmega_{i}}$, $i=1,\hdots, M$. We use exact local solvers and thus obtain
\begin{equation}\label{eq:def local bil form}
a_{\pOmega_{i}}(u_{i},v_{i}) = a_{\Omega}(E_{\mOip\to \Omega}u_{i}, E_{\mOip\to \Omega}v_{i}) \quad \forall u_{i}, v_{i} \in V_{\pOmega_{i}}^{0}, \quad  \bsA_{\pOmega_{i}} = \bsR_{\Omega \to \mOip}\bsA \bsE_{\mOip\to \Omega},
\end{equation}
 for $i=1,\hdots,M$, where $\bsR_{\Omega \to \mOip}$ and $\bsE_{\mOip\to \Omega}=\bsR_{\Omega \to \mOip}^{T}$ are the algebraic counterparts of $R_{\Omega \to \mOip}$ and $E_{\mOip\to \Omega}$, respectively{\AH; cf.}~\cite{Toselli:2005:DDM}. 
Following, e.g., \cite{Toselli:2005:DDM,QuaVal99} we introduce operators $\widetilde{P}_{i}: V_{\Omega}^{0} \rightarrow  V_{\pOmega_{i}}^{0}$, defined as
\begin{equation}\label{eq:def Schwarz op}
a_{\pOmega_{i}}(\widetilde{P}_{i}u,v_{i}) = a_{\Omega}(u,E_{\mOip\to \Omega}v_{i}), \quad \text{for } v_{i} \in  V_{\pOmega_{i}}^{0}, \quad i=1,\hdots,M.
\end{equation}
We may then define projections
\begin{equation}
P_{i} = E_{\mOip\to \Omega}\widetilde{P}_{i}: V_{\Omega}^{0} \rightarrow E_{\mOip\to \Omega}V_{\pOmega_{i}}^{0} \subset V_{\Omega}^{0}, \quad i=1,\hdots,M,
\end{equation}
with algebraic counterparts $\bsP_{i}=\bsE_{\mOip\to \Omega} \bsA_{\pOmega_{i}}^{-1} \bsR_{\Omega \to \mOip} \bsA$, $i=1,\hdots,M${\AH; see}~\cite{Toselli:2005:DDM}{\AH. The} additive Schwarz operator $P_{AS-1}:= \sum_{i=1}^{M} P_{i}$ {\AH then} reads in matrix form as
%and the corresponding Schwarz preconditioner in matrix form can be written as 
\begin{equation}\label{eq:1-level-precond}
\bsP_{AS-1} := \sum_{i=1}^{M} \bsE_{\mOip\to \Omega}\bsA_{\pOmega_{i}}^{-1}\bsR_{\Omega \to \mOip} \bsA = \sum_{i=1}^{M} \bsR_{\Omega \to \mOip}^{T}\bsA_{\pOmega_{i}}^{-1}\bsR_{\Omega \to \mOip} \bsA.
\end{equation}
This Schwarz operator is a preconditioned operator $\bsM_{AS-1}^{-1} \bsA$ consisting of \lila{the one-level Schwarz preconditioner 
$
	\bsM_{AS-1}^{-1} = \sum_{i=1}^{M} \bsE_{\mOip\to \Omega}\bsA_{\pOmega_{i}}^{-1}\bsR_{\Omega \to \mOip}
$
and the system matrix $\bsA$. 
{\AH In}
this one-level Schwarz method, information is only exchanged between neighboring subdomains {\AH through the overlaps, and as a result, the convergence generally deteriorates} 
for a large number of subdomains{\AH; see}~\cite{QuaVal99}. {\AH As a remedy,} one may add a global coarse space $X_{0} \subset V_{\Omega}^{0}$.}\footnote{In many publications, the coarse space is denoted by $V_{0}$. However, to avoid confusion with FE spaces including functions with zero trace, we denote the coarse space here by $X_{0}$.} Correspondingly, we introduce an
{\AH interpolation} operator $E_{0}: X_{0} \rightarrow V_{\Omega}^{0}$, which expresses functions in $X_{0}$ in the FE-basis of $V_{\Omega}^{0}$. The columns of the algebraic counterpart $\bsE_{0}$ therefore contain the FE coefficients of the basis functions of $X_{0}$. Introducing the corresponding restriction operator $R_{0}: V_{\Omega}^{0} \rightarrow X_{0}$ and its algebraic counterpart $\bsR_{0}=\bsE_{0}^T$, we can define $P_{0} = E_{0}\widetilde{P}_{0}: V_{\Omega}^{0} \rightarrow E_{0}X_{0} \subset V_{\Omega}^{0}$. We define the operator $\widetilde{P}_{0}$ analogously to \cref{eq:def Schwarz op}, where
\begin{equation}\label{eq:def:bil form 0}
a_{0}(u_{0},v_{0}) = a_{\Omega}(E_{0}u_{0}, E_{0}v_{0}) \quad \forall u_{0}, v_{0} \in X_{0}, \quad
 \bsA_{0} = \bsR_{0}\bsA \bsE_{0}= \bsR_{0}\bsA \bsR_{0}^{T};
\end{equation}
this means that we also consider the case of an exact coarse solver.
We then define the two-level Schwarz operator~\cite{Toselli:2005:DDM} \lila{
$
P_{AS-2} := \sum_{i=0}^{M} P_{i},
$
and its algebraic counterpart takes the form}
\begin{equation} \label{eq:two-level_schwarz}
	\bsP_{AS-2} = \bsM_{AS-2}^{-1} \bsA := \underset{\text{coarse level}}{\underbrace{\bsE_{0}\bsA_{0}^{-1}\bsR_{0} \bsA}} + \underset{\text{first level}}{\underbrace{\sum_{i=1}^{M} \bsE_{\mOip\to \Omega}\bsA_{\pOmega_{i}}^{-1}\bsR_{\Omega \to \mOip} \bsA}}.
\end{equation}

Different choices of coarse spaces $X_0$ yield numerically scalable preconditioners, that is, a condition number bound which is independent of the number of subdomains. As a result, the convergence of iterative solvers, such as Krylov {\AH subspace} methods, with such a two-level Schwarz preconditioner is independent of the number of subdomains as well. However, using a standard Lagrangian {\AH FE} basis{\AH,} for $X_0$, we obtain the condition number bound
\begin{equation} \label{eq:cond_contrast}
	\kappa\left( \bsP_{AS-2} \right)
	=
	\kappa\left( \bsM_{AS-2}^{-1} \bsA \right) 
	\leq 
	C \max\limits_{T \in \tau_H} \max\limits_{x,y \in \omega_T} \left(\frac{\alpha\left(x\right)}{\alpha\left(y\right)}\right) \left( 1+\frac{H}{\delta}\right)
\end{equation}
for our model problem~\cref{eq:var}{\AH; similar bounds hold for other classical (non-adaptive) coarse spaces. Here, $\tau_H$ is the coarse triangulation, which, in our case, coincides with the nonoverlapping domain decomposition $\left\lbrace \Omega_{i} \right\rbrace_{i=1,\ldots,N}$. Moreover,} $\omega_T$ corresponds to the union of all coarse mesh elements which touch a coarse mesh element $T$. Sharper variants of this estimate can be derived, but the dependence on the contrast of the coefficient function remains{\AH; see~\cite{Graham2007}}. This means that the convergence of a Krylov \lila{subspace method preconditioned} with this two-level Schwarz preconditioner might actually depend on the contrast of the coefficient function $\alpha$, resulting in very high iteration counts; cf. also the results in~\cref{sec:results}.

\subsection{Adaptive Coarse Spaces}\label{subsect:adaptive coarse spaces} 

\Cref{fig:lagrangian_interpolation} attempts to visualize {\AH one of} the reasons that the coefficient contrast {\AH may arise} in the condition number bound. The energy 
\begin{equation} \label{eq:energy}
	\left| \cdot \right|_{a,\Omega}^2 := a_{\Omega} (\cdot,\cdot)
\end{equation}
of the function \lila{$u$ plotted in \cref{fig:lagrangian_interpolation} (left) depends only on the minimum value $\alpha_{\text{min}}$ of the coefficient function $\alpha$ but is independent of the maximum value $\alpha_{\text{max}}$ as its gradient is zero (green) in the yellow region, where $\alpha=\alpha_{\text{max}}$.} If we interpolate the function with piecewise bilinear Lagrangian basis functions \lila{(see \cref{fig:lagrangian_interpolation} (right)) the interpolant $u_{0}$ decays to zero within the yellow region (red), and hence, its energy clearly depends on $\alpha_{\text{max}}$. Therefore,} in any energy estimate of the coarse interpolation, which is part of {\AH the proof of a condition number bound} in the abstract Schwarz theory~\cite{Toselli:2005:DDM},
\begin{equation} \label{eq:est_coarse_interp}
	\left| u_0 \right|_{a,\Omega}^2 \leq C \left| u \right|_{a,\Omega}^2,
\end{equation}
the constant $C$ has to depend on the contrast of $\alpha$, $\alpha_{\text{max}}/\alpha_{\text{min}}$. Therefore, in order to obtain a robust condition number bound, the coefficient function has to be taken into account in the \lila{construction of the} coarse space{\AH; as we will also observe in~\cref{sec:results}, it is additionally necessary to add one coarse basis function for each high coefficient component crossing the domain decomposition interface.}

\begin{figure}
	\centering
	\includegraphics[width=0.32\textwidth]{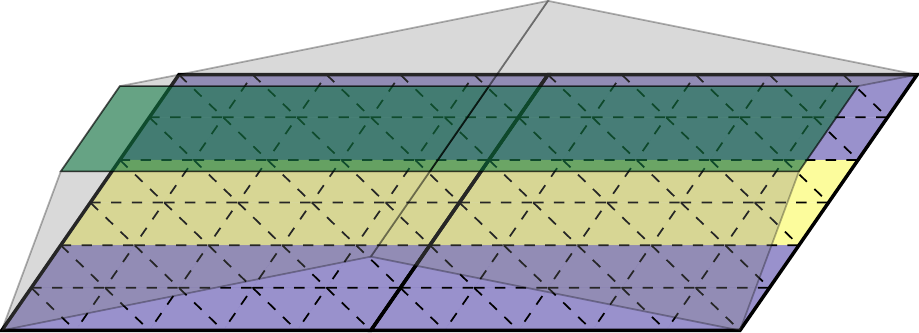}
	\includegraphics[width=0.32\textwidth]{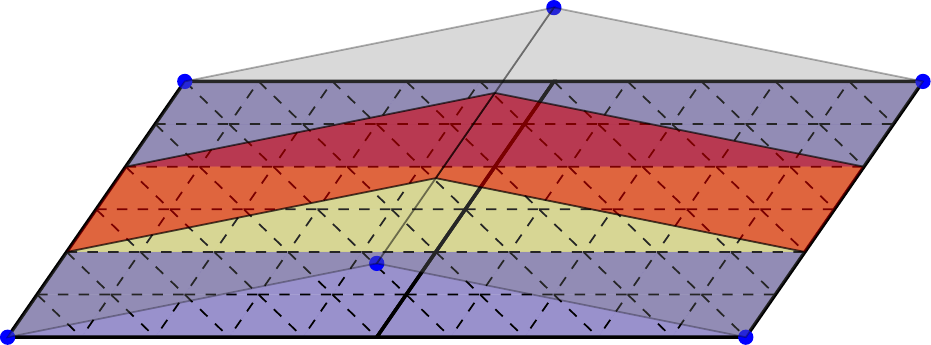}
	\caption{For a heterogeneous coefficient function, a Lagrangian coarse interpolation (here, piecewise {\AH bi}linear; right) of a low-energy function (left) may have a high energy: Let the yellow part of the two (nonoverlapping) subdomains correspond to a high coefficient ($\alpha_{\text{max}}$) and the blue part to a low coefficient ($\alpha_{\text{min}}$). Since the function itself is constant (marked in green) in the high coefficient region but varying in the remaining part, the energy depends on $\alpha_{\text{min}}$ but not on $\alpha_{\text{max}}$. The piecewise linear interpolation has a nonzero gradient everywhere, such that the energy depends on $\alpha_{\text{max}}$ (marked in red). In this case, the stability constant of the interpolation depends on the contrast of the coefficient function.
		\label{fig:lagrangian_interpolation}
	}
\end{figure}

Adaptive coarse spaces account for variations in the coefficient functions by including \lila{eigenfunctions} of local eigenvalue problems into the coarse space\lila{, which is why they are also denoted as \textit{spectral coarse spaces}.} The term \textit{adaptive} stems from the fact that all eigenvalues below a certain tolerance $tol$ can be included in the coarse space, resulting in a condition number bound of {\AH the form}
\begin{equation} \label{eq:cond_bound}
	\kappa\left( \bsP_{AS-2} \right)
	=
	\kappa\left( \bsM_{AS-2}^{-1} \bsA \right)
	\leq
	C \left( 1 + \frac{1}{tol} \right),
\end{equation}
where $C$ is independent of the contrast of the coefficient function. Hence, the number {\AH of} coarse basis functions does not have to be determined beforehand, but it is chosen adaptively 
based on $tol$. 

In this paper, we develop a new energy-minimizing adaptive coarse space \lila{consisting of basis functions} which minimize the energy {\AH $\left| \cdot \right|_{a,\Oi}^2$} on each nonoverlapping subdomain{\AH. Energy-minimizing functions are part of constructing a coarse space with a contrast-independent energy~\cref{eq:est_coarse_interp}. Moreover, constructing the coarse basis by energy-minimization is one of the key ingredients for algebraicity since it does not require the availability of a coarse triangulation. In particular,} the energy-minimizing extension $v_i = H_{\pOi \rightarrow \Oi} (v_{\pOi})$ from $\pOi$ to $\Oi$ for a FE function $v_{\pOi}$ on $\pOi$ {\AH is defined as follows:}
given some boundary values ${\AH v_{\pOi}}$ on the interface, the corresponding 
{\AH extension} $H_{\pOi \rightarrow \Oi} ({\AH v_{\pOi}})$ solves
\begin{equation} \label{eq:energy_min_ext}
v_i
=
\argmin_{v|_{\pOi} = {\AH v_{\pOi}}} 
\left| v \right|_{a,\Oi}^2
\ 
\Leftrightarrow
\ 
\begin{array}{rcl}
	a_{\Oi} (v_i,w_i) & = & 0 \quad \forall w_i \in V_{\Oi}^0, \\
	v_i & = & {\AH v_{\pOi}} \quad \text{on } \pOi.
\end{array}
\end{equation}
An energy-minimizing 
{\AH extension}
$v = H_{\Gamma \rightarrow \Omega}(v_{\Gamma})$ extends interface values $v_\Gamma$ into the interior of each subdomain, with zero Dirichlet boundary conditions on $\partial\Omega$. 
In matrix form, this corresponds to
\begin{equation} \label{eq:harm_ext}   
\bsv
=
\begin{pmatrix}	
	-\bsA_{II}^{-1} \bsA_{I \Gamma} \\
	\bsI_{\Gamma}
\end{pmatrix}
\bsv_\Gamma,
\end{equation}
where we make use of the splitting of the rows and columns corresponding to interior ($I$) and interface ($\Gamma$) nodes
$$
\bsA
=
\begin{pmatrix}
\bsA_{II} & \bsA_{I\Gamma} \\
\bsA_{\Gamma I} & \bsA_{\Gamma\Gamma}
\end{pmatrix}
$$
{\AH and $\bsI_{\Gamma}$ corresponds to the identity matrix on the degrees of freedom of $\Gamma$.}
Here, as usual, the Dirichlet boundary {\AH degrees of freedom} are counted as interior degrees of freedom. Due to the Dirichlet boundary conditions in $\bsA$, {\AH the boundary conditions are then enforced automatically.}
Note that $\bsA_{II} = \diag\left( \bsA_{\Oi,II} \right)$ is a block-diagonal matrix, where $\bsA_{\Oi,II}$ is the matrix corresponding to the interior degrees of freedom in $\Oi$. Hence, in a parallel setting, $\bsA_{II}^{-1}$ can be applied independently and concurrently for the subdomains. Moreover, we have that
$
	\left| v \right|_{a,\Oi}^2
	=
	a_{\Oi} (v,v)
	=
	\bsv^T \bsA \bsv
	=
	\bsv_{\Gamma}^T \bsS \bsv_{\Gamma},
$
with the Schur complement $\bsS = \bsA_{\Gamma \Gamma} - \bsA_{\Gamma I} \bsA_{II}^{-1} \bsA_{I \Gamma}$.

Let us now discuss the general idea of adaptive coarse spaces which are based on an interface partition. 
To that end, let the interface $\Gamma$ be partitioned into edges and vertices. 
\lila{We then} solve a generalized eigenvalue problem of the form
\begin{equation} \label{eq:evp1}
\text{Find } \bsv \in \mathbb{R}^{N_{\mathring{e}}} \enspace \text{such that }
%	\bsS^{(i,j)}_{e} \, \bsv
	\bsS_{e} \, \bsv
	= 
	\mu  	
	\bsA_{\mathring{e}\mathring{e}} \,\bsv
%	\quad \forall \chi \in V_{e}^{0}
\end{equation}
for each edge $e${\AH, where $N_{\mathring{e}}$ is the number of degrees of freedom of the interior of the edge $e$}. Here, $\bsS_{e}$ 
is a Schur complement 
corresponding to the two subdomains $\Omega_i$ and $\Omega_j$ adjacent to $e$, 
and $\bsA_{\mathring{e}\mathring{e}}$ is the restriction of the global matrix $\bsA$ to the degrees of freedom on the interior of the edge. As mentioned above, Schur complement matrices correspond to energy-minimizing extensions. In this eigenvalue problem, we consider such an extension from the $e$ into the adjacent subdomains. {\AH The specific definition of $\bsS_{e}$ may vary slightly for different interface-based adaptive coarse spaces, such as in the boundary conditions in the endpoints of the edge $e$. Moreover, $\bsA_{\mathring{e}\mathring{e}}$ can also be replaced by a scaled mass matrix or a lumped version of that.}

To construct an adaptive coarse space, the eigenfunctions corresponding to eigenvalues below a user-chose tolerance $tol$ are selected and extended by zero onto the remaining interface. Then, these functions are extended in an energy minimizing way by $H_{\Gamma \rightarrow \Omega}$,
{\AH resulting in the coarse basis functions.}

{\AH As will also be discussed in~\cref{sect:Motivation} and visualized in~\cref{fig:energy_minimizing_extensions}, eigenvalue problem~\cref{eq:evp1} relates a low-energy extension ($\bsS_{e}$) and a high-energy extension ($\bsA_{\mathring{e}\mathring{e}}$). Therefore, the resulting coarse basis functions corresponding to low eigenvalues capture the critical components of the coefficient function, resulting in a condition number bound of the form~\cref{eq:cond_bound},
which is independent of the contrast of the coefficient function; see, e.g.,~\cite{Heinlein:2018:Multb,Heinlein:2019:AGD,Heinlein:2020:AGD,Gander:2015:Anala}.
}

\section{Motivation: Challenges and Advantages of Robust Algebraic Preconditioners and Key New Ideas}\label{sect:Motivation}

\lila{Despite the rapid development in adaptive coarse spaces, the development of algebraic adaptive coarse spaces, {\AH that is,} adaptive coarse spaces that {\AH can} be constructed based on the fully assembled system matrix $\bsA$, has been an open question for a longer time. For the practical applicability of a solver, this is, however, a desirable property. In particular, if the solver is fully algebraic, it can be used in any {\AH FE} implementation which provides the linear system {\AH of equations}~\cref{eq:alg} as the solver input. }

Let us briefly discuss the \lila{main challenge}, which can by understood in a similar way as in~\cref{fig:lagrangian_interpolation}. The left{\AH -} and right{\AH -}hand sides of the eigenvalue problem~\cref{eq:evp1} correspond to energies of certain {\AH extensions} of the edge values into the interior of the adjacent subdomains. In the \lila{top left} of~\cref{fig:energy_minimizing_extensions}, we see some function $u$ with an energy $\left| u \right|_{a,\Omega}^2$ that does not depend on $\alpha_{\text{max}}$; the gradient in the yellow region is zero (green). In \lila{addition three different extensions of the edge values of this function are depicted}: The extension on the top right is the extension by zero, and the corresponding energy clearly depends on $\alpha_{\text{max}}$ (red). The matrix in the right{\AH -}hand side of~\cref{eq:evp1} corresponds to this extension, that is, if $\bsu_{\mathring{e}}$ is the discrete vector with the {\AH interior} edge values %$u_e$ 
of $u$, then $\bsu_{\mathring{e}}^T \bsA_{\mathring{e}\mathring{e}} \bsu_{\mathring{e}}$ is the energy of the extension by zero (top right in~\cref{fig:energy_minimizing_extensions}){\AH. T}his extension is algebraic since the matrix can be extracted from $\bsA$. The extension on the bottom right is the energy-minimizing extension of $u_e$ with Neumann data on the boundary; of all functions with trace $u_e$, this function has the minimum energy, which is clearly smaller or equal to the energy of $u$. Hence, the function satisfies an energy estimate of the form~\cref{eq:est_coarse_interp} with a constant $C$ which does not depend on the contrast of $\alpha$, $\frac{\alpha_{\text{max}}}{\alpha_{\text{min}}}$. This is precisely {\AH the extension employed in $\bsS_{e}$ in the left{\AH -}hand side of~\cref{eq:evp1}.} 

Unfortunately, since this extension has Neumann boundary data, the corresponding matrix $\bsS_e$ cannot be extracted algebraically from $\bsA$. An algebraic alternative to this extension would be the energy-minimizing extension with Dirichlet boundary data; cf.~\cref{fig:energy_minimizing_extensions} (bottom left). 
{\AH The}
Dirichlet submatrix corresponding to both subdomains adjacent to the edge $e$, which is required to compute this extension, can {\AH also} be extracted from $\bsA$. However, the energy of this extension clearly depends on $\alpha_{\text{max}}$. Hence, using this algebraic extension in~\cref{eq:evp1}, would also result in a dependency on $\frac{\alpha_{\text{max}}}{\alpha_{\text{min}}}$.

\begin{wrapfigure}{r}{0.65\textwidth}
	\begin{center}
\includegraphics[width=0.32\textwidth]{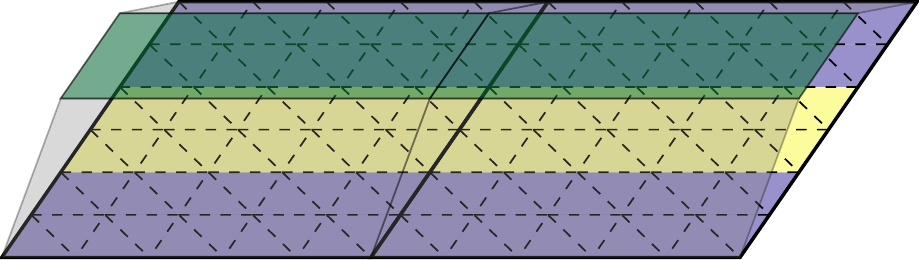} 
\includegraphics[width=0.32\textwidth]{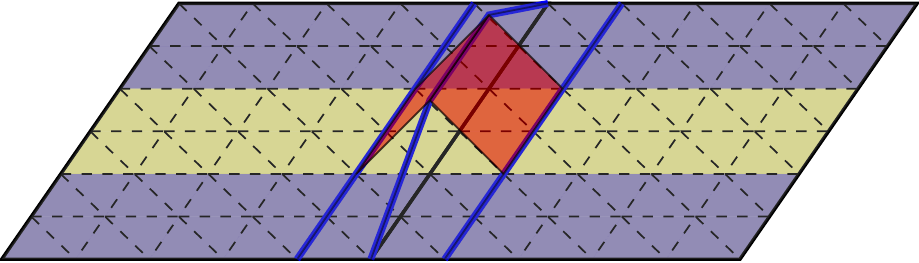} \\
\includegraphics[width=0.32\textwidth]{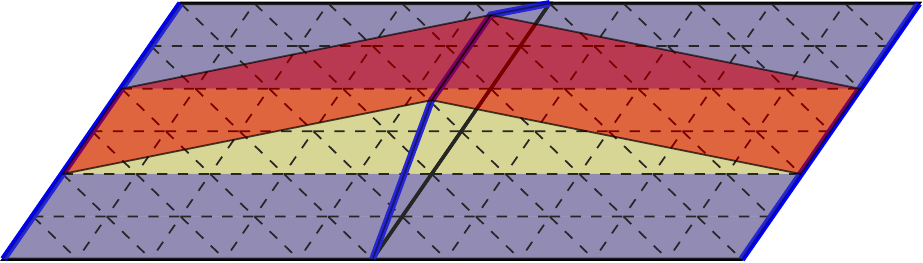}
\includegraphics[width=0.32\textwidth]{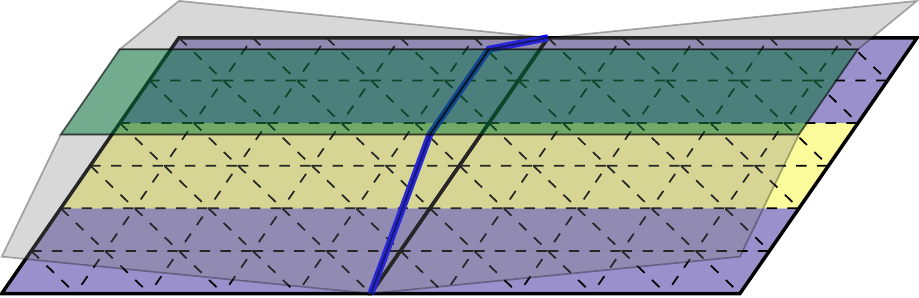}
\end{center}
\caption{For the same configuration as in~\cref{fig:lagrangian_interpolation}, we consider three different extensions of the edge values of a function (top, left): the zero extension $E_{e \to \mOe}$ (top, right), the harmonic extension with Dirichlet boundary data $H_{e \rightarrow \mOe}^{\partial\Omega_{e}}$ (bottom, left), and an harmonic extension with Neumann boundary data $H_{e \rightarrow \mOe}$ (bottom, right). The Dirichlet data of the extensions is marked in blue. Note that, in general, only the extension with Neumann boundary data has a guaranteed lower (or equal) energy compared to the original function; the energy of the other functions may even depend on the contrast. 
\label{fig:energy_minimizing_extensions}
}
\end{wrapfigure}
This short discussion explains why the Neumann matrix in~\cref{eq:evp1} cannot be simply replaced by a Dirichlet matrix; this has also been discussed in~\cite{Heinlein:2018:Multb}. It can be observed that, except for {\AH very} recent approaches targeting algebraic adaptive coarse spaces, all early adaptive coarse spaces are based on eigenvalue problems which use Neumann matrices or information about the coefficient function and geometric information. {\AH Note that there have also been attempts to heuristically construct algebraic robust coarse spaces; cf., e.g, \cite{Heinlein:2018:Multb,Heinlein:2016:POS,heinlein2020frugal}. However, no theory has been proved for these approaches yet, and they might not be robust for arbitrary coefficient functions.} 

\section{A fully algebraic and robust adaptive coarse space based on optimal local approximation spaces} \label{sect:new method}

\lila{In this section, we propose, to the best of our knowledge for the first time, a fully algebraic and robust adaptive {\AH interface}-based coarse space. To that end, we start with the vertex space and the edge space containing the constant function and enhance it by edge eigenfunctions of the \textit{Dirichlet eigenvalue problem} introduced in~\cref{subsect:Dirichlet}  and the \textit{Transfer eigenvalue problem} introduced in ~\cref{subsect:trans}. As we only use Dirichlet matrices in the eigenvalue problems to obtain an algebraic method, we {\AH might} miss the constant function in the edge space. 
However, it is well known in {\AH DDMs} that a scalable coarse has to be able to represent the null space of the global Neumann operator on each subdomain which does not touch the global Dirichlet boundary.}
\lila{We thus start with
\begin{align}
X_{vert}  &=  \text{span} \lbrace H_{\Gamma \rightarrow \Omega} ( E_{\mathcal{V} \rightarrow \Gamma} (1) ) : \mathcal{V} \text{ vertex} \rbrace, \quad \text{and}\label{def:vertex_space}\\
X_{const}  &=  \text{span} \lbrace H_{\Gamma \rightarrow \Omega} ( E_{e \rightarrow \Gamma} (1) ) : e \text{ edge} \rbrace,\label{def:constant_edge_space}
\end{align}
where we consider the vertices and edges of the nonoverlapping domain decomposition $\left\lbrace \Omega_{i} \right\rbrace_{i=1,\ldots,N}$. Here, $E_{\mathcal{V} \to \Gamma}:V_{\mathcal{V}} \to V_{\Gamma}^{0}$ and $E_{e \to \Gamma}:V_{e}^{0} \to V_{\Gamma}^{0}$ extend the FE function by zero from the vertex $\mathcal{V}$ or the edge $e$ to the interface $\Gamma$, respectively.

Note that $X_{\text{GDSW}} = X_{vert} \oplus X_{const}$ corresponds to the classical GDSW coarse space. This space is also automatically included in the AGDSW adaptive coarse space. In particular, the constant edge function corresponds to the zero eigenvalue in the AGDSW edge eigenvalue problem due to full Neumann boundary data; hence, the function is always selected as a basis function.
}

\subsection{A Dirichlet eigenvalue problem for the edges}\label{subsect:Dirichlet}

As motivated in~\cref{sect:Motivation}, for each edge $e \subset \Gamma$, we consider a Dirichlet eigenvalue problem which is a slight modification of the Neumann eigenvalue problem~\cref{eq:evp1} used in the AGDSW coarse spaces; cf.~\cite{Heinlein:2018:AGD,Heinlein:2019:AGD,Heinlein:2020:AGD}. 
First, we introduce for a fixed but arbitrary edge $e$ a so-called oversampling domain $\Omega_{e}$ such that $\dist (\partial\Omega_{e}, e) \geq \upsilon_{e} > 0$; {\AH see~\cref{fig:channels_varying_length_comb,fig:comp_evps} for illustrations of oversampling domains}. In addition, we introduce $E_{e \to \mOe}:V_{e}^{0} \to V_{\Omega_{e}}^{0}$, which assigns the coefficients of the FE functions on the edge $e$ to the FE basis functions in $\Omega_{e}$ whose associated nodes lie on $e$ and extends by zero on all other nodes in $\Omega_{e}$; see~\cref{fig:energy_minimizing_extensions} (top right) {\AH and~\cref{fig:comp_evps} (third column)} for illustrations.

We may then introduce the {\AH corresponding} inner product $b_{e}: V_{e}^{0} \times V_{e}^{0} \to \R$ defined 
\begin{align} \label{eq:c_diri}
b_e (\chi,\zeta) := a_{\Omega_{e}} ( E_{e \to \mOe}\chi, E_{e \to \mOe}\zeta) \qquad \forall \chi, \zeta \in V_{e}^{0}.
\end{align}
%\lila{
Furthermore, we introduce the inner product $d_{e}: V_{e}^{0} \times V_{e}^{0} \to \R$ defined as 
	\begin{align} \label{eq:d_diri}
	d_e (\chi,\zeta) := a_{\Omega_{e}} ( H_{\mathring{e} \rightarrow \mOe}^{ \partial\Omega_{e}} ( R_{e \to \mathring{e}}\chi ), H_{\mathring{e} \rightarrow \mOe}^{ \partial\Omega_{e}} (R_{e \to \mathring{e}} \zeta)) \qquad \forall \chi, \zeta \in V_{e}^{0},
	\end{align}
where $\mathring{e}$ denotes the discrete interior of the edge $e$ and, algebraically, $R_{e \to \mathring{e}}$ simply removes the degrees of freedom associated with the corners of the edge. %, that is, the vertices adjacent to the edge. 
Moreover, $H_{\mathring{e} \rightarrow \mOe}^{ \partial\Omega_{e}}:V_{\mathring{e}} \to V_{\Omega_{e}}^{0}$ is defined {\AH as}
	\begin{align*}
	a_{\Omega_{e}}(H_{\mathring{e} \rightarrow \mOe}^{ \partial\Omega_{e}}\chi, v) = 0 \enspace \forall v \in V_{\Omega_{e}}^{0}, \enspace H_{\mathring{e} \rightarrow \mOe}^{ \partial\Omega_{e}}\chi = 0 \text{ on } \partial\Omega_{e}, \enspace
	H_{\mathring{e} \rightarrow \mOe}^{ \partial\Omega_{e}}\chi = \chi \text{ on } \mathring{e};
	\end{align*}
hence, the upper index $\pOe$ denotes that the harmonic extension has homogeneous Dirichlet boundary data on $\pOe$ instead of Neumann boundary data{\AH; cf.~the discussion in~\cref{sect:Motivation}.}
%}
Finally, we denote by $\| \cdot \|_{b_{e}}$ and $\| \cdot \|_{d_{e}}$ the respective norms.
We may then consider the following eigenvalue problem: find $(\psi_{e}^{(i)}, \mu^{(i)}) \in (V_{e}^{0},\mathbb{R}^{+})$ such that 
\begin{equation} \label{eq:evp_diri}
d_e (\psi_{e}^{(i)}, \chi) = \mu^{(i)} b_e (\psi_{e}^{(i)}, \chi ) \quad \forall \chi \in V_{e}^{0}.
\end{equation}
We select all $n_{dir,e}$ eigenfunctions corresponding to eigenvalues below a chosen tolerance $tol_{dir}$ and define the space 
\begin{equation}\label{def:diri coarse space}
	X_{dir}  :=  \text{span} \lbrace H_{\Gamma \rightarrow \Omega} (E_{e \to \Gamma}\psi_{e}^{(i)}): e \text{ edge}, \mu^{(i)} \leq tol_{dir} \rbrace.
\end{equation}
Here, $E_{e \to \Gamma}:V_{e}^{0} \to V_{\Gamma}^{0}$ extends the FE function by zero from $e$ to $\Gamma$, and $H_{\Gamma \rightarrow \Omega}: V_{\Gamma}^{0} \to V_{\Omega}^{0}$ is the energy-minimizing extension from the interface into the interior of the subdomains as introduced in~\cref{subsect:adaptive coarse spaces}. \lila{An eigenvalue problem similar to \cref{eq:evp_diri} has already been considered in~\cite{Heinlein:2018:Multb}, but robustness of the resulting coarse space could not be shown {\AH theoretically}.}

We emphasize that none of the operators and matrices involved in~\cref{eq:evp_diri} require any additional assembly and rely only on matrices that can be extracted from $\boldsymbol{A}$; see~\cref{sec:implementation} for details. \lila{However, as can be seen in~\cref{sec:results} in~\cref{tab:channels_varying_length}, 
choosing $X_{0}=X_{vert} \oplus X_{const} \oplus X_{dir}$
does in general not yield a robust coarse space. To obtain a {\AH (provably)} robust coarse space, we introduce a second eigenvalue problem.}

\subsection{Transfer eigenvalue problem}\label{subsect:trans}

To obtain a robust coarse space, we exploit a well-known $a$-orthogonal decomposition of $V_{\Omega_{e}}$\footnote{To simplify the notation we assume here that $\partial\Omega_{e} \cap \partial\Omega=\emptyset$; otherwise $V_{\Omega_{e}}$ has to be replaced by $V_{\Omega_{e}}^{\partial\Omega}:=\{v \in V_{\Omega_{e}}\, :\, v = 0 \text{ on } \partial\Omega\}$ \AH here and henceforth.}, namely that every $u \in V_{\Omega_{e}}$ can be written as 
\begin{align} \label{eq:split}
u = u_{\rm \Omega_{e},ha} + u_{\rm \Omega_{e},ha}^\perp,
\end{align}
where $u_{\rm \Omega_{e},ha}^\perp \in V^0_{\Omega_{e}}$, and $u_{\rm \Omega_{e},ha}$ satisfies 
\begin{equation}\label{eq:def:harmonic}
a_{\Omega_{e}}(u_{\rm \Omega_{e},ha},v) = 0 \quad \forall v \in V^0_{\Omega_{e}} \quad \text{and} \quad u_{\rm \Omega_{e},ha}|_{\pOe}=u|_{\pOe},
\end{equation}
that is, it is an energy-minimizing extension; cf.~\cref{eq:energy_min_ext}.
The decomposition~\cref{eq:split} is $a$-orthogonal, {\AH thanks to~\cref{eq:def:harmonic}}, and hence, we have
\begin{equation} \label{eq:stability}
a_{\Omega_{e}} (u_{\rm \Omega_{e},ha}, u_{\rm \Omega_{e},ha}^\perp) = 0 \quad \text{and} \quad | u |_{a,\Omega_{e}}^{2} = | u_{\rm \Omega_{e},ha} |_{a,\Omega_{e}}^{2} + |u_{\rm \Omega_{e},ha}^\perp |_{a,\Omega_{e}}^{2}.
\end{equation}

In our approach, the eigenvalue problem from \cref{subsect:Dirichlet} will serve to control the trace of functions in $V^0_{\Omega_{e}}$ on $e$. By introducing an eigenvalue problem on the space of functions that locally solve the PDE 
\begin{equation}
V_{\rm \Omega_{e},ha}:=\{ w \in V_{\Omega_{e}} \, : \, a_{\Omega_{e}}(w,v) = 0 \quad \forall v \in V_{\Omega_{e}}^{0}\},
\end{equation}
we can control the trace of functions in $V_{\rm \Omega_{e},ha}$ on $e$ as well and therefore the traces of all functions in $V_{\Omega_{e}}$ on $e$. 
The fact that the decomposition is $a$-orthogonal and {\AH the right} equation~\eqref{eq:stability} in particular will allow us to combine the contributions from both eigenvalue problems when deriving the bound for the condition number in \cref{sect:bound cond number}.

The eigenvalue problem we consider on the space of local solutions of the PDE $V_{\rm \Omega_{e},ha}$ has originally been suggested in a slightly different form in the context of multiscale and localized model order reduction methods in~\cite{SmePat16}. In that paper, it has been introduced to construct interface spaces that yield a provably optimally convergent static condensation approximation of the solution of the PDE. Similar to \cite{SmePat16} we introduce a suitable inner product $( \cdot , \cdot )_{\pOe}: V_{\pOe} \times V_{\pOe} \rightarrow \R$, where we require that the induced norm $\| \cdot \|_{\pOe}$ satisfies
\begin{equation}\label{eq:norm_equival}
c_{1} \|\zeta \|_{\pOe} \leq \sqrt{\alpha_{min}} \| \zeta \|_{L^2(\pOe)} \leq c_{2} \|\zeta \|_{\pOe} \quad \text{for all } w \in V_{\pOe},
\end{equation}
with constants $c_{1}$ and $c_{2}$ that are independent of the mesh size and the coefficient $\alpha$. 

\begin{remark}[Discussion of inner product on $\pOe$] 
	Based on the proof in~\cref{subsec:novel}, \lila{$a_{\Oe} ( H_{\pOe \rightarrow \Omega_{e}}(\cdot) , H_{\pOe \rightarrow \Omega_{e}}(\cdot))$ is  a natural choice as it leads to a very simple bound for the condition number.	 However, $a_{\Oe} (\cdot, \cdot)$ cannot not be computed algebraically as it requires the local Neumann matrices on $\Oe$. Instead, we choose
\begin{equation}\label{eq:def:inner prod bound}
	( \chi, \zeta )_{\pOe}
	=
	\alpha_{min} {\AH \frac{h}{N_{\pOe}}}
	( \chi, \zeta )_{l^2(\pOe)},
\end{equation}
which also satisfies~\cref{eq:norm_equival} thanks to $\| \cdot \|_{L^2(\pOe)}^2 \equiv \frac{h}{N_{\pOe}} \| \cdot \|_{l^2(\pOe)}^2,$ 
where $N_{\pOe}$ denotes the number of {\AH FE} nodes on $\pOe$.
Note that $\alpha_{min}$, $h$, and $N_{\pOe}$ are only constant scaling factors which can be included into $( \cdot , \cdot )_{\pOe}$ or the choice of a suitable $tol_{tr}$. {\AH If a non-uniform triangulation is employed, $h$ would have to be replaced by, for instance, the length of the edge $\left| e \right|$ in the equivalence of the $L^2$- and $l^2$-norms.}
We remark that even though the algebraic choice \cref{eq:def:inner prod bound} yields a slightly more complicated bound of the condition number (see~\cref{subsec:novel}), a numerical comparison in \cref{sec:results} shows that the coarse spaces corresponding to \cref{eq:def:inner prod bound} and $a_{\Oe} ( H_{\pOe \rightarrow \Omega_{e}}(\cdot) , H_{\pOe \rightarrow \Omega_{e}}(\cdot))$ perform very similar{\AH ly; see also~\cref{fig:comp_evps}}.}
\end{remark}

Next, as in \cite{SmePat16}, we 
introduce the transfer operator $\widetilde{T}: V_{\pOe} \rightarrow \{w|_{e} \, , w \in V_{\rm \Omega_{e},ha}\}=:V_{\rm \Omega_{e},ha,e}$\footnote{\lila{Again for $\partial\Omega_{e}\cap \partial\Omega \neq \emptyset$ one needs to replace $V_{\pOe}$ by $V_{\pOe}^{\partial\Omega}:=\{v \in V_{\pOe}\, : \, v = 0 \text{ on } \partial\Omega\}.$}} defined as 
$\widetilde{T}\varsigma:=(H_{\pOe \rightarrow \Omega_{e}}\varsigma)|_{e}$ for $\varsigma \in V_{\pOe}$,
where the harmonic extension operator $H_{\pOe \rightarrow \Omega_{e}}:V_{\pOe} \to V_{\Omega_{e}}$ is defined 
{\AH as}
\begin{equation*}
a_{\Omega_{e}}(H_{\pOe \rightarrow \Omega_{e}}\chi, v) = 0 \enspace \forall v \in V_{\Omega_{e}}^{0}, \quad \text{and} \quad H_{\pOe \rightarrow \Omega_{e}}\chi = \chi \text{ on } \partial\Omega_{e}; 
\end{equation*}
{\AH cf.~\cref{subsect:adaptive coarse spaces}.}
However, as $X_{vert}$, introduced in \eqref{def:vertex_space}, already accounts for the degrees of freedom in the vertices of the coarse discretization, we wish to only take into account the behavior of the functions in the interior of $e$. Therefore, we define the slightly modified transfer operator $T=(I - I_{\mathcal{V},e})\widetilde{T}$, where $I_{\mathcal{V},e}$ denotes the restriction of $I_{\mathcal{V}}$ to $e$, and consider the following transfer eigenvalue problem: Find $(\phi^{(i)},\lambda_{i}) \in (V_{\rm \Omega_{e},ha},\R^{+})$ such that
\begin{equation}\label{eq:transfer_eigenproblem}
b_e (T(\phi^{(i)}_{e}|_{\pOe}), T(w|_{\pOe})) = \lambda^{(i)} (\phi^{(i)}_{e}|_{\pOe}, w|_{\pOe} )_{\pOe}\quad \forall w \in V_{\rm \Omega_{e},ha}.
\end{equation}
We select the eigenfunctions corresponding to the $n_{tr,e}$ largest eigenvalues such that {\AH $\lambda^{(n_{tr,e})} > tol_{tr}$ and $\lambda^{(n_{tr,e}+1)} \leq tol_{tr}$} and introduce 
\begin{align}
\varphi^{(i)}_{e}&=T(\phi^{(i)}_{e}|_{\pOe}), \enspace i=1,\hdots \qquad \text{and} \label{def:transfer_basis}\\
X_{tr} & =  \text{span} \lbrace H_{\Gamma \rightarrow \Omega} (E_{e \to \Gamma}(\varphi^{(i)}_{e})): e \text{ edge}, \lambda^{(i)} > tol_{tr} \rbrace.\label{def:trans coarse space}
\end{align}
\lila{We highlight that thanks to~\cref{eq:def:inner prod bound} the calculation of the eigenfunctions of \cref{eq:transfer_eigenproblem} only requires access to the global stiffness matrix $\bsA$ resulting in a fully algebraic coarse space; for further details and the computational realization of \cref{eq:transfer_eigenproblem}, see \cref{sec:implementation}.}

Note that we may also perform a singular value decomposition (SVD) of the operator $T$, which reads 
$T\zeta=\sum_{i}\sigma^{(i)}\hat{\varphi}^{(i)}(\chi^{(i)},\zeta)_{\pOe}$ for $\zeta \in V_{\pOe}$ with orthonormal bases $\hat{\varphi}^{(i)} \in V_{\rm \Omega_{e},ha,e}^{0}:=\{(w|_{e}-I_{\mathcal{V},e}(w|_e)) \, , w \in V_{\rm \Omega_{e},ha}\}$, $\chi^{(i)} \in V_{\pOe}$, and singular values $\sigma^{(i)} \in \mathbb{R}^{+}_{0}$. Then, we have, up to numerical errors, $\sigma^{(i)} = \sqrt{\lambda^{(i)}}$ and
$\spanlin \{\varphi^{(1)}_{e},\hdots,\varphi^{(n)}_{e}\} = \spanlin \{\hat{\varphi}^{(1)},\hdots,\hat{\varphi}^{(n)}\}$.  We can thus infer from results of the optimality of the SVD that the space $\Lambda^{n}=\spanlin \{\varphi^{(1)}_{e},\hdots,\varphi^{(n)}_{e}\}$ minimizes $\|T - \Pi_{\Lambda^{n}}T\|$ 
among all $n$-dimensional subspaces of $V_{\rm \Omega_{e},ha}^{0}$ and hence optimally (in the sense of Kolmogorov) approximates the range of $T$ and thus $V_{\rm \Omega_{e},ha,e}^{0}$; see also \cite{BabLip11,Pin85,SmePat16}.

\subsection{\AH Complete definition of the adaptive coarse space} \label{sec:fa_coarse_space}

Finally, we define the complete adaptive coarse space $X_0$ as
\begin{equation}\label{def:adaptive_coarse_space}
X_{0}
=
%X_{0}
X_{\text{VCDT}}
:=
X_{vert} \oplus  X_{const} \oplus X_{dir} \oplus X_{tr},
\end{equation}
where $X_{vert}$, $X_{const}$, $X_{dir}$, and $X_{tr}$ were defined in \cref{def:vertex_space}, \cref{def:constant_edge_space}, \cref{def:diri coarse space}, and \cref{def:trans coarse space}, respectively.

\section{Bound of the condition number}\label{sect:bound cond number}

As we use exact local {\AH and coarse} solvers, \cref{eq:def local bil form,eq:def:bil form 0}, we obtain local stability {\AH with stability constant $\omega = 1$}; cf.~\cite[Assumption~2.4]{Toselli:2005:DDM} and{\AH, for the readers' convenience,} Assumption SM1.1. Thanks to \cite[Lemma~3.11 and follow-up discussion]{Toselli:2005:DDM} and the proof of \cite[Theorem~4.1]{Dryja:1994:DAS}, we obtain
\begin{align}  \label{eq:cond_est}
\kappa\left( \bsM_{AS}^{-1} \bsA \right) 
\leq
C_0^2 \left( m+1 \right),
\end{align}
where $C_0^2$ is the constant in the stable decomposition in \cref{ass:stabledecomp} below~\cite[Assumption~2.2]{Toselli:2005:DDM}, and $m \in \mathbb{N}$ is an upper bound for the number of overlapping subdomains $\Omega_{i}^{\prime}$ any {\AH FE node} in $\Omega$ can belong to. Hence, $m$ only depends on the structure of the overlapping domain decomposition and is, for regular domain decompositions, bounded from above; also for domain decompositions generated by mesh partitioners such as METIS~\cite{Karypis:1998:ASP}, $m$ is generally reasonably small.
Therefore, in order to obtain a condition number bound, it is sufficient to bound the constant $C_0^2$ in the stable decomposition:

\begin{assumption}[Stable decomposition {\cite[Assumption 2.2]{Toselli:2005:DDM}}]\label{ass:stabledecomp}
	There exists a constant $C_0$, such that every $u \in V_{\Omega}^{0}$ admits a decomposition
	\begin{align}
	u &= E_{0}u_{0} + \sum_{i=1}^M E_{\mOip\to \Omega} u_i  \label{eq:decomp}\\
	\text{such that}\qquad		&\sum_{i=0}^M a_{\pOmega_{i}}(u_{i},u_{i}) \leq C_0^2 a_{\Omega}(u,u).\label{eq:stabledecomp}
	\end{align}
\end{assumption}

\subsection{Construction of the stable decomposition} 
\label{subsec:defdecomp}
One key novelty of the present manuscript{\AH, which is a key ingredient for proving robustness,} 
lies in the {\AH specific} definition of the coarse component $u_{0}$. To that end, we define projection operators 
\begin{align}
\Pi_{e,dir} v &:= \sum_{i=1}^{n_{dir,e}} b_e ( v , \psi_{e}^{(i)} ) \psi_{e}^{(i)} \quad \forall v \in V^{0}_{e} \quad \text{and}\label{def:proj op diri} \\
\Pi_{e,tr} v &:= \sum_{i=1}^{n_{tr,e}} \frac{1}{\lambda^{(i)}} b_e ( v , \varphi^{(i)}_{e}) \varphi^{(i)}_{e}\quad \forall v \in V_{\rm \Omega_{e},ha,e}^{0}, \label{def:proj op trans}
\end{align}
where $n_{dir,e}$ and $n_{tr,e}$ denote the number of selected eigenfunctions from the eigenproblems \cref{eq:evp_diri} and \cref{eq:transfer_eigenproblem}, respectively. Note that, for all $v \in V_{\rm \Omega_{e},ha,e}^{0}$ and $v = T(\widetilde{v}|_{\pOe})$, we have
$
\Pi_{e,tr} v = \sum_{i=1}^{n_{tr,e}} ( \widetilde{v}|_{\pOe} , \phi^{(i)}|_{\pOe} )_{\pOe} \varphi^{(i)}_{e}
$, where $\phi^{(i)}$ is the $i$-th eigenfunction of the transfer eigenvalue problem \cref{eq:transfer_eigenproblem}. Exploiting the latter and the definition $\varphi^{(i)}_{e}:=T(\phi^{(i)}_{e}|_{\pOe})$ (see \cref{def:transfer_basis}) yields the expression of $\Pi_{e,tr}v$ in \cref{def:proj op trans}. Moreover, fitting the definition of the coarse spaces $X_{dir}$ and $X_{tr}$ in \cref{def:diri coarse space} and \cref{def:trans coarse space}, respectively, we also introduce the maps \lila{
	\begin{align*}
	\Pi_{dir} v &:=  H_{\Gamma \rightarrow \mO} ( E_{\me \rightarrow \Gamma} (\Pi_{e,dir} v )) \quad\forall v \in V_{e}^{0}, \quad \text{and} \\
	\Pi_{tr} v &:=  H_{\Gamma \rightarrow \mO} ( E_{\me \rightarrow \Gamma} (\Pi_{e,tr} v)) \quad\forall v \in V_{\rm \Omega_{e},ha,e}^{0}.
	\end{align*}}

\paragraph*{\textbf{Definition of the coarse interpolation}}

As we will later invoke Poincar\'{e}'s inequality {\AH in the proof of the condition number bound}, 
we have to include the constant function in our interpolation{\AH. In this step, we 
	have to introduce the minimum value of the coefficient function $\alpha_{\min}$ into the estimate, which is why we already included it in the right{\AH -}hand side of the transfer eigenvalue problem; see~\cref{eq:norm_equival}. Our final condition number bound will still be robust since the maximum eigenvalue and, hence, the contrast will not appear.}
Similarly as in \cite{SmePat16}, we exploit that functions which are constant on $\Omega_{e}$ lie in $V_{\rm \Omega_{e},ha}$. We may thus write 
\begin{align}\label{eq:splitting_u}
u|_{\Omega_{e}} = \widehat{u_{\rm \Omega_{e},ha}} + c_{u}\mathbbm{1}_{\Omega_{e}} + u_{\rm \Omega_{e},ha}^\perp, \text{ where }\enspace  \widehat{u_{\rm \Omega_{e},ha}}:=u_{\rm \Omega_{e},ha} - c_{u}\mathbbm{1}_{\Omega_{e}} \in V_{\rm \Omega_{e},ha},
\end{align}
\lila{and $c_{u} \in \R$ is zero when $\pOe \cap \partial\Omega \neq \emptyset$ and will be selected later otherwise. }We define the coarse interpolation \lila{
\begin{align}
	u_0 &:=H_{\Gamma \rightarrow \mO}((I_{\mathcal{V}}u)|_{\Gamma}) + \sum\limits_{e \in \Gamma}  (u_{0,\rm \Omega_{e},ha} +  u_{0,\rm \Omega_{e},ha}^\perp), \quad \text{where}\label{eq:def:coarse_interp}    \\
	\nonumber	u_{0,\rm \Omega_{e},ha} &:= \Pi_{tr} (\widehat{u_{\rm \Omega_{e},ha}}|_e - I_{\mathcal{V},e}\widehat{u_{\rm \Omega_{e},ha}}|_e) + H_{\Gamma \rightarrow \mO} ( E_{\me \rightarrow \Gamma} ( c_{u}\mathbbm{1}_{e} - I_{\mathcal{V},e}c_{u}\mathbbm{1}_{e}))  \\
	\nonumber u_{0,\rm \Omega_{e},ha}^\perp &:= \Pi_{dir} (u_{\rm \Omega_{e},ha}^\perp|_{e} - I_{\mathcal{V},e}u_{\rm \Omega_{e},ha}^\perp|_{e}),
\end{align}}
where $I_{\mathcal{V}}$ is the interpolation by the nodal functions in $X_{vert}$ corresponding to the vertices, and $I_{\mathcal{V},e}$ is the {\AH restriction of this} 
interpolation 
to the edge $e$. To simplify notations, we assume that $u_{0}$ is already expressed in the basis of $V_{\Omega}^{0}$.

\paragraph*{\textbf{Definition of the local components $\boldsymbol{u_{i}}$, $\boldsymbol{i=1,\hdots,M}$}}
{\AH As typical in the proof of condition number estimates for Schwarz methods, w}e define 
\begin{equation}\label{eq:def:local_contr}
u_i = 
%{\AH R_i} 
{\AH R_{\Omega \to \mOip}}
I^h(\theta_i(u-u_0)), \quad i=1,\hdots,M,
\end{equation}
where $I^{h}(\theta_i(u-u_0)) \in V_{\Omega}^{0}$ denotes the interpolant of $\theta_i(u-u_0)$ and $\lbrace\theta_i\rbrace_{i=1}^M$ is a partition of unity corresponding to an overlapping decomposition ${\AH \lbrace \widetilde{\Omega}_{i} \rbrace_{i=1}^M}$ of the domain $\Omega$ with overlap $h${\AH; hence, it is also a partition of unity on the overlapping domain decomposition ${\AH \lbrace \mOip \rbrace_{i=1}^M}$.} In detail, we require for a {\AH FE} node $x_h$ that $\theta_i(x_h)=1$ in $\Omega_i$, $\theta_i(x_h)=1/m_{x_h}$ on $\Gamma$ and $\theta_i(x_h)=0$ otherwise; here $m_{x_h}$ denotes the number of subdomains $\Omega_{i}$ the {\AH FE} node $x_h$ belongs to. This definition ensures \eqref{eq:decomp}. {\AH See, for example, \cite[Section~3.6]{Toselli:2005:DDM} for the same construction with a slightly different choice of the partition of unity. 
}

\subsection{Bound of the coarse level and local contributions}\label{subsec:bound coarse local}

{\AH The general structure of the proof follows earlier works, such as~\cite{Gander:2015:Anala,Heinlein:2018:Multb,Heinlein:2019:AGD,Heinlein:2020:AGD}. We first prove estimates for the coarse and local components, as summarized in~\cref{lem:coarse level local bound}, and then combine them to the final estimate for $C_0^2$ in~\cref{prop:total bound const stable decomp} in~\cref{subsec:complete bound}. 
Plugged into~\cref{eq:cond_est}, we obtain the final condition number estimate.
The main step in the proof is finding an upper bound for the term $| E_{\me \rightarrow \mOe} \left[(u - u_0)|_{e}\right] |_{a,\Omega_{e}}^2$. This corresponds to upper bounds for $| w_{e_{ij}} (u - u_0)|_{a,\Omega_{e}}^2$ in OS-ACMS~\cite{Heinlein:2018:Multb} or $| z_{\xi \to \Omega_\xi} (u - u_0)|_{a,\Omega_\xi}^2$ in AGDSW~\cite{Heinlein:2018:AGD}, respectively; for details, see the corresponding references. The proof of this bound differs significantly for our new method and is the central novel contribution of this manuscript in terms of numerical analysis and the topic of the next subsection. The remainder of the proof of the bound of the coarse level and local contributions, as summarized in~\cref{lem:coarse level local bound}, is standard. It follows the earlier works listed above, and can, for the readers' convenience, be found in subsection SM1.1.
}

\begin{lemma}[Bound of the coarse level and local contributions]\label{lem:coarse level local bound}
	Let $u_{0}$ and $u_{i}$ be defined as in \cref{eq:def:coarse_interp} and \cref{eq:def:local_contr}, respectively, and let $m_{e}$ denote the maximal number of edges $e$ in a subdomain $\Omega_{i}$. Then, we have
	\begin{align}
	| u_0 |_{a,\Omega}^2 &\leq 2 | u |_{a,\Omega}^2 + 2 m_{e} \sum_{i=1}^{M} \sum_{e \subset \partial\Omega_i} \left| E_{\me \rightarrow \mOe} \left[(u - u_0)|_{e}\right] \right|_{a,\Omega_{e}}^2, \label{eq:bound coarse level}\\
	\sum_{i=1}^{M} | u_i |_{a,\pOmega_{i}}^2 &\leq 18 | u |_{a,\Omega}^2 + 15 m_{e} \sum_{i=1}^{M} \sum_{e \subset \partial\Omega_i} \left| E_{\me \rightarrow \mOe} \left[(u - u_0)|_{e}\right] \right|_{a,\Omega_{e}}^2,\label{eq:bound local contr}
	\end{align}
	where $E_{\me \to \mOe}$ has been defined in the beginning of \cref{subsect:Dirichlet}.
\end{lemma}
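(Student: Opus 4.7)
The plan is to treat the two bounds separately, taking the edge quantity $\left| E_{\me \rightarrow \mOe} [(u-u_0)|_e] \right|_{a,\Omega_e}^2$ as a black-box on the right-hand side (its control via the Dirichlet and transfer eigenvalue problems is the genuinely new part and will be handled in the next subsection). Both estimates rest on two standard ingredients: (i) energy-minimality of the discrete harmonic extension $H_{\Gamma \rightarrow \mO}$, and (ii) a standard partition-of-unity stability estimate for the finite element interpolant $I^h(\theta_i \cdot)$. The overall flow follows the analyses of OS-ACMS and AGDSW cited in the paper.

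For bound \eqref{eq:bound coarse level}, the crucial structural observation is that, by construction in \eqref{eq:def:coarse_interp}, $u_0$ is the discrete harmonic extension of its own interface trace, $u_0 = H_{\Gamma \rightarrow \mO}(u_0|_\Gamma)$, and that the vertex-interpolant piece $H_{\Gamma \rightarrow \mO}((I_{\mathcal{V}}u)|_\Gamma)$ forces $(u - u_0)|_\Gamma$ to vanish at every domain decomposition vertex. Hence $(u-u_0)|_\Gamma$ splits cleanly as a sum of edge-supported functions $\sum_{e \subset \Gamma} E_{e \rightarrow \Gamma}[(u-u_0)|_e]$. Writing $u_0 = H_{\Gamma \rightarrow \mO}(u|_\Gamma) - H_{\Gamma \rightarrow \mO}((u - u_0)|_\Gamma)$ and using $(a+b)^2 \leq 2a^2 + 2b^2$, the first summand is bounded by $|u|_{a,\Omega}^2$ because $u$ itself is an admissible extension of $u|_\Gamma$ and $H_{\Gamma \rightarrow \mO}$ is energy-minimal. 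For the second summand, I would restrict to a single subdomain $\Omega_i$, observe that it equals the discrete harmonic extension of $\sum_{e \subset \partial\Omega_i}(u-u_0)|_e$, apply Cauchy–Schwarz to introduce the combinatorial factor $m_e$, and then use energy-minimality once more to replace each edge piece by the zero extension $E_{\me \rightarrow \mOe}[(u-u_0)|_e]$ (valid since $\Omega_i \subset \Omega_e$). Summing over $i$ yields precisely \eqref{eq:bound coarse level}.

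For bound \eqref{eq:bound local contr}, I would invoke the standard partition-of-unity estimate $|I^h(\theta_i v)|_{a,\pOmega_{i}}^2 \leq C\,|v|_{a,\tOi}^2$, which follows from the fact that $\nabla \theta_i$ is concentrated in a layer of width $h$ so that an inverse inequality absorbs the $1/h^2$ against a discrete Poincaré-type bound on the strip; see, e.g., \cite[Sec.~3.6]{Toselli:2005:DDM}. Applying this with $v = u - u_0$, summing over $i$ and using that any point belongs to at most $m$ of the overlapping subdomains, one obtains $\sum_i |u_i|_{a,\pOmega_{i}}^2 \lesssim |u - u_0|_{a,\Omega}^2$. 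Finally, the triangle inequality $|u-u_0|_{a,\Omega}^2 \leq 2|u|_{a,\Omega}^2 + 2|u_0|_{a,\Omega}^2$ combined with \eqref{eq:bound coarse level} delivers \eqref{eq:bound local contr}.

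Conceptually there is no serious obstacle, as the argument is a careful assembly of known pieces; the main source of friction is bookkeeping of the specific constants $2$, $18$ and $15$, which arise from several cascaded applications of $(a+b)^2 \leq 2a^2+2b^2$ together with the partition-of-unity stability constant and the finite overlap multiplicity $m$. The geometric factor $m_e$ remains inside the estimates because it enters naturally through the Cauchy–Schwarz step when collecting contributions from all edges of a subdomain, and it is harmless for shape-regular decompositions.
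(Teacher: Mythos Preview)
Your treatment of \cref{eq:bound coarse level} is essentially the paper's argument: split $u_0 = H_{\Gamma\to\Omega}(u|_\Gamma) - H_{\Gamma\to\Omega}((u-u_0)|_\Gamma)$, use energy minimality for the first piece, and localize the second piece edge by edge via Cauchy--Schwarz. One small inaccuracy: you justify the last step by ``$\Omega_i \subset \Omega_e$'', which need not hold (the paper explicitly allows $\Omega_e = \Omega_e^{2h}$, a thin strip around $e$). What makes the step work is that the zero extension $E_{e\to\Omega_i}[(u-u_0)|_e]$ is supported only on the single layer of elements of $\Omega_i$ touching $e$, and those elements are contained in $\Omega_e$ because $\dist(\partial\Omega_e,e)\geq \upsilon_e>0$; hence its energy on $\Omega_i$ is bounded by the energy of $E_{e\to\Omega_e}[(u-u_0)|_e]$ on $\Omega_e$.

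Your route to \cref{eq:bound local contr} has a genuine gap. The estimate you invoke,
\[
\bigl|I^h(\theta_i v)\bigr|^2_{a,\Omega'_i}\leq C\,|v|^2_{a,\tilde\Omega_i},
\]
with $C$ independent of $h$ and of the contrast, is false for the minimal-overlap partition of unity defined in \cref{eq:def:local_contr}. Take $v$ nearly constant (say $\approx 1$) in a neighborhood of an edge $e$ where $\alpha=\alpha_{\max}$ on the adjacent elements: then $|v|^2_{a,\tilde\Omega_i}$ does not feel $\alpha_{\max}$, but on any element $T$ touching $e$ the interpolant $I^h(\theta_i v)$ jumps from $1$ to $1/2$ across one mesh layer, so $\bigl|I^h(\theta_i v)\bigr|^2_{a,T}\sim \alpha_{\max}$. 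No inverse inequality or Poincar\'e-on-the-strip argument rescues this, because $v=u-u_0$ need not vanish anywhere on the strip. What the paper (following the AGDSW analyses) does instead is to observe directly that on an element $T\subset\Omega_i$ touching $e$ one has
\[
I^h(\theta_i v)\big|_T \;=\; v\big|_T \;-\; I^h\bigl((1-\theta_i)v\bigr)\big|_T,
\]
and the second term is precisely a scaled zero extension $E_{e\to T}(v|_e)$, since $(1-\theta_i)$ is nonzero only at nodes on $\partial\Omega_i$. Thus the edge terms $\bigl|E_{e\to\Omega_e}[(u-u_0)|_e]\bigr|^2_{a,\Omega_e}$ appear \emph{directly} in the local estimate, not merely through bounding $|u_0|^2$ via \cref{eq:bound coarse level}. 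Combined with the $a$-orthogonal splitting $u-u_0 = (u - H_{\Gamma\to\Omega}(u|_\Gamma)) + H_{\Gamma\to\Omega}((u-u_0)|_\Gamma)$ (the first summand is zero on $\Gamma$, so $I^h(\theta_i\cdot)$ acts trivially on it), this yields \cref{eq:bound local contr} with the stated constants.
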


\subsection{\lila{Bound of the extension of $\boldsymbol{(u - u_{0})|_{e}}$}}\label{subsec:novel}
To complete the proof of the bound of the condition number it thus remains to estimate the term $| E_{\me \rightarrow \mOe} \left[(u - u_0)|_{e}\right] |_{a,\Omega_{e}}^2$, which is the topic of this subsection. 
{\AH 
Different from other approaches, we decompose $V_{\Omega_{e}}$ in an $a$-orthogonal way and derive a bound for each part separately. 
Since the decomposition is $a$-orthogonal, we are finally able to combine both parts again. In particular, a}s already indicated in \cref{subsect:trans} and more specifically in \cref{eq:split}, we exploit that every $u \in V_{\Omega_{e}}$ can be written as 
\begin{align} \label{eq:split2}
u = u_{\rm \Omega_{e},ha} + u_{\rm \Omega_{e},ha}^\perp,
\end{align}
where $u_{\rm \Omega_{e},ha}^\perp \in V^0_{\Omega_{e}}$ and  $u_{\rm \Omega_{e},ha}$ satisfies 
\begin{equation}\label{eq:def:harmonic2}
a_{\Omega_{e}}(u_{\rm \Omega_{e},ha},v) = 0 \quad \forall v \in V^0_{\Omega_{e}}, \quad \text{and} \quad u_{\rm \Omega_{e},ha}|_{\pOe}=u|_{\pOe}.
\end{equation}
Thanks to the definition of the coarse level contribution in \cref{eq:def:coarse_interp}, \lila{$u_{0,\rm \Omega_{e},ha}|_{e^{*}}=0$ and $u_{0,\rm \Omega_{e},ha}^\perp|_{e^{*}}=0$ for $e \neq e^{*}$}, and the linearity of the interpolation operator $I_{\mathcal{V}}$, we may then split the term $| E_{\me \rightarrow \mOe} \left[(u - u_0)|_{e}\right] |_{a,\Omega_{e}}^2$ as follows
\begin{align}
	| E_{\me \rightarrow \mOe} \left[(u - u_0)|_{e}\right] |_{a,\Omega_{e}}^2 & \leq 2\Bigl| E_{\me \rightarrow \mOe} \left[\left( u_{\rm \Omega_{e},ha} -  I_{\mathcal{V}}u_{\rm \Omega_{e},ha} - u_{0,\rm \Omega_{e},ha} \right)|_{e}\right] \Bigr|_{a,\Omega_{e}}^2 \label{eq:special_est_aux1}\\
	&\quad\enspace + 2\Bigl|E_{\me \rightarrow \mOe} \left[\left(u_{\rm \Omega_{e},ha}^\perp -  I_{\mathcal{V}}u_{\rm \Omega_{e},ha}^\perp - u_{0,\rm \Omega_{e},ha}^\perp \right)|_{e}\right] \Bigr|_{a,\Omega_{e}}^2. \label{eq:special_est_aux2}
\end{align}
Next, by exploiting the properties of the eigenvalue problems introduced in \cref{subsect:Dirichlet,subsect:trans}, we will estimate the terms in \cref{eq:special_est_aux1} and \cref{eq:special_est_aux2} separately, and combine the estimate at the end by exploiting {\AH $a$-orthogonality, that is,}
\begin{equation}\label{eq:stability2}
| u |_{a,\Omega_{e}}^{2} = | u_{\rm \Omega_{e},ha} |_{a,\Omega_{e}}^{2} + |u_{\rm \Omega_{e},ha}^\perp |_{a,\Omega_{e}}^{2}.
\end{equation}

\paragraph*{\textbf{Bound of the harmonic part}} To estimate the terms containing the harmonic part of $u$, we will make use of the fact that the space $\Lambda_{n}=\spanlin \{\varphi^{(1)}_{e},\hdots,\varphi^{(n)}_{e}\}$ minimizes $\|T - \Pi_{e,tr}T\|$ among all $n$-dimensional subspaces of $V_{\rm \Omega_{e},ha}^{0}$ (see last paragraph of \cref{subsect:trans} for {\AH the} definition) and that for all $v \in V_{\rm \Omega_{e},ha,e}=\{w|_{e} \, , w \in V_{\rm \Omega_{e},ha}\}$  we have
\begin{equation} \label{eq:evp1:spectral2_2}
\left\| \left(v|_e - I_{\mathcal{V},e}(v|_e)\right)  - \Pi_{e,tr} \left(v|_e - I_{\mathcal{V},e}(v|_e)\right)  \right\|_{b_{e}} \leq \sqrt{tol_{tr}} \| v|_{\pOe} \|_{\pOe}.
\end{equation} 
This result follows from the well-known fact that $\|T - \Pi_{e,tr}T\|=\sigma_{n_{tr,e}+1}$ as $\Lambda_{n}$ is the span of the first $n$ left singular vectors of $T$; cf.~the Eckart-Young theorem, for instance, in~\cite{GolVan13} for the discrete setting and Theorem 2.2 in Chapter 4 in~\cite{Pin85} for infinite-dimensional spaces, and~\cite{BabLip11, SmePat16} for how to use this result to derive error bounds for the local and global approximation error of optimally converging multiscale methods. We may then show the following result.

\begin{proposition}[Bound of the harmonic part]\label{lem:bound_harm}
	Let the constants $c_{t,e}$ and $c_{p,e}$ be defined as 
	\lila{\begin{equation}\label{eq:def:constants}
		c_{t,e}:=\sup_{v \in V_{\rm \Omega_{e},ha}} \frac{\| v |_{\partial\Omega_{e}} \|_{L^{2}(\pOe)}}{\|v\|_{H^{1}(\Omega_{e})}}, 
		\quad
		c_{p,e}:=\sup_{v \in V_{\rm \Omega_{e},ha}} \frac{\|v - \frac{c_{\Omega_{e}}}{|\Omega_{e}|} \int_{\Omega_{e}} v \mathbbm{1}_{\Omega_{e}}  \|_{L^{2}(\Omega_{e})}}{\|\nabla v \|_{L^{2}(\Omega_{e})}},
		\end{equation}
		where $c_{\Omega_{e}}=1$ if $\pOe \cap \partial\Omega = \emptyset$ and $0$ otherwise.} Then, we have
	\begin{equation}\label{eqbound_ha}
		\Bigl| E_{\me \rightarrow \mOe} \left[\left( u_{\rm \Omega_{e},ha} -  I_{\mathcal{V}}u_{\rm \Omega_{e},ha} - u_{0,\rm \Omega_{e},ha} \right)|_{e}\right] \Bigr|_{a,\Omega_{e}}^2  \leq \frac{c_{t,e}^{2}tol_{tr} (1 + c_{p,e}^{2}) }{c_{1}^{2}}| u_{\rm \Omega_{e},ha} |_{a,\Omega_{e}}^{2}, 
	\end{equation}
	{\AH where $c_1$ depends on the choice of the bilinear form $( \cdot, \cdot )_{\pOe}$ in~\cref{eq:def:inner prod bound}; see~\cref{eq:norm_equival}.}
\end{proposition}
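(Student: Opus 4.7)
The plan is to show that the constant shift $c_u \mathbbm{1}_{\Omega_e}$ cancels in the expression on the left{\AH -}hand side of~\cref{eqbound_ha}, reducing it to a quantity that is directly controlled by the optimality estimate~\cref{eq:evp1:spectral2_2} of the transfer eigenvalue problem. The remaining trace term is then converted into $| u_{\rm \Omega_{e},ha} |_{a,\Omega_{e}}^{2}$ through a short chain of Sobolev inequalities, with the choice of $c_u$ and the scaling of $(\cdot,\cdot)_{\pOe}$ tuned precisely to keep the bound contrast{-}independent.

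First, I would rewrite $| E_{\me \rightarrow \mOe}[\cdot] |_{a,\Omega_{e}}^2 = \|\cdot\|_{b_{e}}^2$ using the definition~\cref{eq:c_diri}. Since $H_{\Gamma \rightarrow \mO}$ is the identity on $\Gamma$, the restriction of $u_{0,\rm \Omega_{e},ha}$ to $e$ simplifies to $\Pi_{e,tr}(\widehat{u_{\rm \Omega_{e},ha}}|_e - I_{\mathcal{V},e}\widehat{u_{\rm \Omega_{e},ha}}|_e) + c_u\mathbbm{1}_e - I_{\mathcal{V},e}(c_u\mathbbm{1}_e)$. Combining this with the decomposition $u_{\rm \Omega_{e},ha} = \widehat{u_{\rm \Omega_{e},ha}} + c_u\mathbbm{1}_{\Omega_e}$ from~\cref{eq:splitting_u} and the linearity of $I_{\mathcal{V}}$, the constant contributions cancel exactly, yielding
\[
(u_{\rm \Omega_{e},ha} - I_{\mathcal{V}}u_{\rm \Omega_{e},ha} - u_{0,\rm \Omega_{e},ha})|_e = w - \Pi_{e,tr} w, \qquad w := \widehat{u_{\rm \Omega_{e},ha}}|_e - I_{\mathcal{V},e}\widehat{u_{\rm \Omega_{e},ha}}|_e.
\]
Because constants lie in the kernel of $a_{\Omega_{e}}(\cdot,\cdot)$, we have $\widehat{u_{\rm \Omega_{e},ha}} \in V_{\rm \Omega_{e},ha}$, so~\cref{eq:evp1:spectral2_2} applied with $v = \widehat{u_{\rm \Omega_{e},ha}}$ gives $\|w - \Pi_{e,tr}w\|_{b_{e}}^{2} \leq tol_{tr}\,\|\widehat{u_{\rm \Omega_{e},ha}}|_{\pOe}\|_{\pOe}^{2}$.

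Converting the right{\AH -}hand side into the energy now uses~\cref{eq:norm_equival} to pass $\|\cdot\|_{\pOe}^{2} \leq (\alpha_{min}/c_{1}^{2})\|\cdot\|_{L^{2}(\pOe)}^{2}$, followed by the definition of $c_{t,e}$ to bound $\|\widehat{u_{\rm \Omega_{e},ha}}|_{\pOe}\|_{L^{2}(\pOe)}^{2}$ by $c_{t,e}^{2}\,\|\widehat{u_{\rm \Omega_{e},ha}}\|_{H^{1}(\Omega_{e})}^{2}$. Choosing $c_{u}$ to be the mean value of $u_{\rm \Omega_{e},ha}$ on $\Omega_{e}$ when $\pOe \cap \partial\Omega = \emptyset$ (and $c_{u} = 0$ otherwise) activates the Poincaré{-}type estimate $\|\widehat{u_{\rm \Omega_{e},ha}}\|_{L^{2}(\Omega_{e})}^{2} \leq c_{p,e}^{2}\|\nabla \widehat{u_{\rm \Omega_{e},ha}}\|_{L^{2}(\Omega_{e})}^{2}$ built into $c_{p,e}$, so $\|\widehat{u_{\rm \Omega_{e},ha}}\|_{H^{1}(\Omega_{e})}^{2} \leq (1 + c_{p,e}^{2})\|\nabla \widehat{u_{\rm \Omega_{e},ha}}\|_{L^{2}(\Omega_{e})}^{2}$. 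Finally $\nabla(c_{u}\mathbbm{1}_{\Omega_{e}}) = 0$ and $\alpha_{min}\|\nabla u_{\rm \Omega_{e},ha}\|_{L^{2}(\Omega_{e})}^{2} \leq | u_{\rm \Omega_{e},ha}|_{a,\Omega_{e}}^{2}$ combine to cancel the $\alpha_{min}$ produced by~\cref{eq:norm_equival}, giving the asserted inequality.

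The only subtle point, and the source of contrast{-}independence, is that the $\alpha_{min}$ built into $(\cdot,\cdot)_{\pOe}$ through~\cref{eq:def:inner prod bound} exactly absorbs the $\alpha_{min}^{-1}$ incurred when passing from $\|\nabla u_{\rm \Omega_{e},ha}\|_{L^{2}(\Omega_{e})}^{2}$ to $|u_{\rm \Omega_{e},ha}|_{a,\Omega_{e}}^{2}$; without this scaling the bound would inherit $\alpha_{max}/\alpha_{min}$. Equally important is the constant$-$shift cancellation in the second paragraph, since without it the Poincaré constant would have to be used on $u_{\rm \Omega_{e},ha}$ itself, which is not necessarily mean{-}zero. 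Once these two pieces line up, the rest is a routine concatenation of trace and Poincaré inequalities applied on top of the SVD optimality of $\Pi_{e,tr}$.
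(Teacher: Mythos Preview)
Your proposal is correct and follows essentially the same route as the paper's own proof: first collapse the edge expression to $\|w-\Pi_{e,tr}w\|_{b_e}^2$ via the constant--shift cancellation, invoke the transfer--operator optimality estimate~\cref{eq:evp1:spectral2_2}, then chain the norm equivalence~\cref{eq:norm_equival}, the trace constant $c_{t,e}$, the Poincar\'e constant $c_{p,e}$ (activated by the mean--value choice of $c_u$), and finally absorb the $\alpha_{\min}$ using $\alpha_{\min}\|\nabla u_{\rm \Omega_{e},ha}\|_{L^{2}(\Omega_{e})}^{2}\leq |u_{\rm \Omega_{e},ha}|_{a,\Omega_{e}}^{2}$. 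Your commentary on the two delicate points---the role of the $\alpha_{\min}$ scaling in $(\cdot,\cdot)_{\pOe}$ and the necessity of the constant shift for Poincar\'e---matches the paper's reasoning exactly.
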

\begin{proof}
	By exploiting the definition of $u_{0,\rm \Omega_{e},ha}$ in \cref{eq:def:coarse_interp}, the restriction to the edge $e$, the linearity of the interpolation operators $I_{\mathcal{V}}$ and $I_{\mathcal{V},e}$, the definition of $\widehat{u_{\rm \Omega_{e},ha}}$ in \cref{eq:splitting_u}, and the definition of the inner product $b_{e}$ in \cref{eq:c_diri}, we obtain
	\begin{align*}
	&\Bigl| E_{\me \rightarrow \mOe} \Bigl[\Bigl( u_{\rm \Omega_{e},ha} -  I_{\mathcal{V}}u_{\rm \Omega_{e},ha} - u_{0,\rm \Omega_{e},ha} \Bigr)\Bigr|_{e}\Bigr] \Bigr|_{a,\Omega_{e}}^2 \\
	&\enspace = \Bigl| E_{\me \rightarrow \mOe} \Bigl[\Bigl( u_{\rm \Omega_{e},ha} -  I_{\mathcal{V}}u_{\rm \Omega_{e},ha} - \Pi_{tr} (\widehat{u_{\rm \Omega_{e},ha}}|_e - I_{\mathcal{V},e}\widehat{u_{\rm \Omega_{e},ha}}|_e) \\
	&\qquad\qquad\qquad - H_{\Gamma \rightarrow \mO} ( E_{\me \rightarrow \Gamma} ( c_{u}\mathbbm{1}_{e} - I_{\mathcal{V},e}c_{u}\mathbbm{1}_{e}))
	\Bigr)\Bigr|_{e}\Bigr] \Bigr|_{a,\Omega_{e}}^2 \\
	&\enspace = \Bigl| E_{\me \rightarrow \mOe} \Bigl[ (u_{\rm \Omega_{e},ha} |_{e} -  I_{\mathcal{V},e}u_{\rm \Omega_{e},ha}|_{e}) - \Pi_{e,tr} (\widehat{u_{\rm \Omega_{e},ha}}|_e - I_{\mathcal{V},e}\widehat{u_{\rm \Omega_{e},ha}}|_e) \\
	&\qquad\qquad\qquad - ( c_{u}\mathbbm{1}_{e} - I_{\mathcal{V},e}c_{u}\mathbbm{1}_{e})
	\Bigr] \Bigr|_{a,\Omega_{e}}^2 \\
	&\enspace = \Bigl| E_{\me \rightarrow \mOe} \Bigl[ (\widehat{u_{\rm \Omega_{e},ha}}|_e - I_{\mathcal{V},e}\widehat{u_{\rm \Omega_{e},ha}}|_e) - \Pi_{e,tr} (\widehat{u_{\rm \Omega_{e},ha}}|_e - I_{\mathcal{V},e}\widehat{u_{\rm \Omega_{e},ha}}|_e)\Bigr] \Bigr|_{a,\Omega_{e}}^2 \\
	&\enspace = \bigl\| (\widehat{u_{\rm \Omega_{e},ha}}|_e - I_{\mathcal{V},e}\widehat{u_{\rm \Omega_{e},ha}}|_e) - \Pi_{e,tr} (\widehat{u_{\rm \Omega_{e},ha}}|_e - I_{\mathcal{V},e}\widehat{u_{\rm \Omega_{e},ha}}|_e) \bigr\|_{b_e}^2.
	\end{align*}
	As $\widehat{u_{\rm \Omega_{e},ha}}|_e$ is in $V_{\rm \Omega_{e},ha,e}=\{w|_{e} \, , w \in V_{\rm \Omega_{e},ha}\}$, we may then invoke \cref{eq:evp1:spectral2_2} {\AH and obtain} %to conclude that
	\begin{equation}\label{eq:est special aux1}
	\Bigl| E_{\me \rightarrow \mOe} \Bigl[\Bigl( u_{\rm \Omega_{e},ha} -  I_{\mathcal{V}}u_{\rm \Omega_{e},ha} - u_{0,\rm \Omega_{e},ha} \Bigr)\Bigr|_{e}\Bigr] \Bigr|_{a,\Omega_{e}}^2 \leq tol_{tr} \| \widehat{u_{\rm \Omega_{e},ha}}|_{\pOe} \|_{\pOe}^{2}. 
	\end{equation}

	To conclude the estimate of the harmonic part of the function, we exploit \eqref{eq:norm_equival} {\AH and} choose $c_{u}$ in $\widehat{u_{\rm \Omega_{e},ha}}:=u_{\rm \Omega_{e},ha} - c_{u}\mathbbm{1}_{\Omega_{e}}$ as $c_{u}=(1/|\Omega_{e}|) \int_{\Omega_{e}} u_{\rm \Omega_{e},ha}$ \lila{if $\pOe\cap\partial\Omega = \emptyset$ and $c_{u}=0$ otherwise}. {\AH Now, we} apply the trace theorem and the Poincar\'{e} inequality:
	\begin{align*}
	&\Bigl| E_{\me \rightarrow \mOe} \Bigl[\Bigl( u_{\rm \Omega_{e},ha} -  I_{\mathcal{V}}u_{\rm \Omega_{e},ha} - u_{0,\rm \Omega_{e},ha} \Bigr)\Bigr|_{e}\Bigr] \Bigr|_{a,\Omega_{e}}^2 \\
	&\quad \leq tol_{tr} \| \widehat{u_{\rm \Omega_{e},ha}}|_{\pOe} \|_{\pOe}^{2} \leq tol_{tr} \frac{\alpha_{min}}{c_{1}^{2}} \|\widehat{u_{\rm \Omega_{e},ha}}|_{\pOe} \|_{L^{2}(\pOe)}^{2} \\
	&\quad \leq tol_{tr} \frac{c_{t,e}^{2}\alpha_{min}}{c_{1}^{2}} (\|\widehat{u_{\rm \Omega_{e},ha}} \|_{L^{2}(\Omega_{e})}^{2} + \|\nabla \widehat{u_{\rm \Omega_{e},ha}}  \|_{L^{2}(\Omega_{e})}^{2}) \\
	&\quad \leq tol_{tr} \frac{c_{t,e}^{2}\alpha_{min}}{c_{1}^{2}} (c_{p,e}^{2}\|\nabla u_{\rm \Omega_{e},ha} \|_{L^{2}(\Omega_{e})}^{2} + \|\nabla \widehat{u_{\rm \Omega_{e},ha}}  \|_{L^{2}(\Omega_{e})}^{2}) \\
	&\quad = tol_{tr} \frac{c_{t,e}^{2}\alpha_{min}}{c_{1}^{2}}  (1 + c_{p,e}^{2}) \|\nabla u_{\rm \Omega_{e},ha} \|_{L^{2}(\Omega_{e})}^{2}  \leq  \frac{c_{t,e}^{2}tol_{tr}}{c_{1}^{2}} (1 + c_{p,e}^{2}) | u_{\rm \Omega_{e},ha} |_{a,\Omega_{e}}^{2}. 
	\end{align*}
\end{proof}

\paragraph*{\textbf{Bound of the perpendicular part}}
By exploiting that the eigenfunctions $\psi_{e}^{(i)}$, $i=1,\hdots,\dim (V_{e}^{0})$, of \cref{eq:evp_diri} span $V_{e}^{0}$ and that we select all $n_{dir,e}$ eigenfunctions corresponding to eigenvalues below a chosen tolerance $tol_{dir}$ to define the space $X_{dir}$, we obtain {\AH by standard spectral arguments for adaptive coarse spaces} that
\begin{equation} \label{eq:evp2:spectral1}
\| v \|_{d_e}^2 = \| \Pi_{e,dir} v \|_{d_e}^2 + \| v - \Pi_{e,dir} v \|_{d_e}^2, 
\  
\| v - \Pi_{e,dir} v \|_{b_e}^2 \leq \frac{1}{tol_{dir}} \| v - \Pi_{e,dir} v \|_{d_e}^2,
\end{equation}
{\AH for each $v \in V_e^0$; cf.~, e.g.,~\cite{KRR15a,KRR15b,Spillane:2014:Absta,Heinlein:2018:Multb,Heinlein:2019:AGD}.} Using these two results, we may prove the following result.

\begin{proposition}[Bound of the perpendicular part]\label{lem:bound_perp}
	We have 
	\begin{equation}\label{eq:bound_perp}
	\Bigl|E_{\me \rightarrow \mOe} \left[\left(u_{\rm \Omega_{e},ha}^\perp -  I_{\mathcal{V}}u_{\rm \Omega_{e},ha}^\perp - u_{0,\rm \Omega_{e},ha}^\perp \right)|_{e}\right] \Bigr|_{a,\Omega_{e}}^2 \leq \frac{1}{tol_{dir}} |u_{\rm \Omega_{e},ha}^\perp|_{a,\Omega_{e}}^{2}.
	\end{equation}
\end{proposition}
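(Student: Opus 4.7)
The plan is to mirror the four-step pattern used for the harmonic part: unfold the LHS into a squared $b_e$-norm of the form $\|w - \Pi_{e,dir} w\|_{b_e}^2$ for a suitable $w \in V_{e}^{0}$, apply the spectral estimate~\eqref{eq:evp2:spectral1}, and finally bound $\|w\|_{d_e}^2$ by $|u_{\rm \Omega_{e},ha}^\perp|_{a,\Omega_{e}}^2$ by producing an explicit competitor in the energy-minimization that defines $H_{\mathring{e} \rightarrow \mOe}^{ \partial\Omega_{e}}$.

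First I would unfold the definitions. Using $|E_{\me \rightarrow \mOe} \chi |_{a,\Omega_{e}}^2 = \|\chi\|_{b_e}^2$ from~\eqref{eq:c_diri}, the linearity of $I_{\mathcal{V}}$ and of its edge-restriction $I_{\mathcal{V},e}$, the identity $(u_{0,\rm \Omega_{e},ha}^\perp)|_{e} = \Pi_{e,dir}(u_{\rm \Omega_{e},ha}^\perp|_{e} - I_{\mathcal{V},e} u_{\rm \Omega_{e},ha}^\perp|_{e})$, and the fact that $(u_{0,\rm \Omega_{e},ha}^\perp)|_{e^*} = 0$ for edges $e^* \neq e$, the LHS rewrites as $\|w - \Pi_{e,dir} w\|_{b_e}^2$ with
\[
w := u_{\rm \Omega_{e},ha}^\perp|_{e} - I_{\mathcal{V},e} u_{\rm \Omega_{e},ha}^\perp|_{e} \in V_{e}^{0},
\]
where membership in $V_{e}^{0}$ holds because $I_{\mathcal{V},e}$ agrees with $u_{\rm \Omega_{e},ha}^\perp$ at the endpoints (vertices) of $e$. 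Invoking both parts of~\eqref{eq:evp2:spectral1} then yields
\[
\|w - \Pi_{e,dir} w\|_{b_e}^2 \leq \frac{1}{tol_{dir}} \|w - \Pi_{e,dir} w\|_{d_e}^2 \leq \frac{1}{tol_{dir}} \|w\|_{d_e}^2,
\]
where the last inequality is the $d_e$-orthogonality of $\Pi_{e,dir}$.

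The main obstacle, and the one step that uses something specific to this coarse space, is bounding $\|w\|_{d_e}^2$ by $|u_{\rm \Omega_{e},ha}^\perp|_{a,\Omega_{e}}^2$. The key observation is that the vertex nodal functions spanning $X_{vert}$ are built from the zero extensions $E_{\mathcal{V} \rightarrow \Gamma}(1)$, and therefore vanish at every non-vertex FE node on $\Gamma$; hence $I_{\mathcal{V},e} u_{\rm \Omega_{e},ha}^\perp|_{e}$ vanishes on $\mathring{e}$, so that $R_{e \to \mathring{e}}\, w = u_{\rm \Omega_{e},ha}^\perp|_{\mathring{e}}$. By the definition~\eqref{eq:d_diri}, $\|w\|_{d_e}^2$ equals the energy of $H_{\mathring{e} \rightarrow \mOe}^{ \partial\Omega_{e}}(R_{e \to \mathring{e}}\, w)$, which is the energy-minimizer over all $v \in V_{\Omega_{e}}$ satisfying $v|_{\partial\Omega_{e}} = 0$ and $v|_{\mathring{e}} = R_{e \to \mathring{e}}\, w$. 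But $u_{\rm \Omega_{e},ha}^\perp \in V_{\Omega_{e}}^{0}$ already satisfies both boundary conditions; hence it is an admissible competitor and $\|w\|_{d_e}^2 \leq |u_{\rm \Omega_{e},ha}^\perp|_{a,\Omega_{e}}^2$. Combining with the spectral estimate above yields~\eqref{eq:bound_perp}.
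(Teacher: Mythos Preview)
Your proof is correct and follows essentially the same approach as the paper's: unfold the left-hand side into $\|w-\Pi_{e,dir}w\|_{b_e}^2$ with $w=u_{\rm \Omega_{e},ha}^\perp|_e - I_{\mathcal{V},e}u_{\rm \Omega_{e},ha}^\perp|_e$, apply the spectral estimate~\eqref{eq:evp2:spectral1} to reach $\frac{1}{tol_{dir}}\|w\|_{d_e}^2$, observe that $R_{e\to\mathring{e}}w = u_{\rm \Omega_{e},ha}^\perp|_{\mathring{e}}$ since the vertex interpolant vanishes on $\mathring{e}$, and conclude by using $u_{\rm \Omega_{e},ha}^\perp\in V_{\Omega_e}^0$ as a competitor in the energy-minimization defining $H_{\mathring{e}\to\Omega_e}^{\partial\Omega_e}$. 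Your write-up just makes the intermediate orthogonality step $\|w-\Pi_{e,dir}w\|_{d_e}^2\le\|w\|_{d_e}^2$ explicit, whereas the paper absorbs it into a single invocation of~\eqref{eq:evp2:spectral1}.
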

\begin{proof}
	By exploiting the definition of $u_{0,\rm \Omega_{e},ha}^\perp$ in \cref{eq:def:coarse_interp}, the restriction to the edge $e$, the linearity of the interpolation operators $I_{\mathcal{V}}$ and $I_{\mathcal{V},e}$, the definition of the inner product $b_{e}$ in \cref{eq:c_diri}, and \cref{eq:evp2:spectral1}, we obtain
	\begin{align*}
	&\Bigl|E_{\me \rightarrow \mOe} \left[\left(u_{\rm \Omega_{e},ha}^\perp -  I_{\mathcal{V}}u_{\rm \Omega_{e},ha}^\perp - u_{0,\rm \Omega_{e},ha}^\perp \right)|_{e}\right] \Bigr|_{a,\Omega_{e}}^2 \\
	& = \Bigl|E_{\me \rightarrow \mOe} \left[\left(u_{\rm \Omega_{e},ha}^\perp -  I_{\mathcal{V}}u_{\rm \Omega_{e},ha}^\perp - \Pi_{dir} (u_{\rm \Omega_{e},ha}^\perp|_{e} - I_{\mathcal{V},e}u_{\rm \Omega_{e},ha}^\perp|_{e}) \right)|_{e}\right] \Bigr|_{a,\Omega_{e}}^2 \\
	& = \Bigl|E_{\me \rightarrow \mOe} \left[\left((u_{\rm \Omega_{e},ha}^\perp|_{e} -  I_{\mathcal{V},e}u_{\rm \Omega_{e},ha}^\perp|_{e}) - \Pi_{e,dir} (u_{\rm \Omega_{e},ha}^\perp|_{e} - I_{\mathcal{V},e}u_{\rm \Omega_{e},ha}^\perp|_{e}) \right)\right] \Bigr|_{a,\Omega_{e}}^2 \\
	& = \|(u_{\rm \Omega_{e},ha}^\perp|_{e} -  I_{\mathcal{V},e}u_{\rm \Omega_{e},ha}^\perp|_{e}) - \Pi_{e,dir} (u_{\rm \Omega_{e},ha}^\perp|_{e} - I_{\mathcal{V},e}u_{\rm \Omega_{e},ha}^\perp|_{e}) \|_{b_{e}}^2 \\
	& \overset{\cref{eq:evp2:spectral1}}{\leq}\frac{1}{tol_{dir}}  \|u_{\rm \Omega_{e},ha}^\perp|_{e} -  I_{\mathcal{V},e}u_{\rm \Omega_{e},ha}^\perp|_{e} \|_{d_{e}}^2.
	\end{align*}
	A close inspection of the definition of the inner product $d_{e}$ in~\cref{eq:d_diri} reveals that, as we have $R_{e \to \mathring{e}} (u_{\rm \Omega_{e},ha}^\perp|_{e} -  I_{\mathcal{V},e}u_{\rm \Omega_{e},ha}^\perp|_{e}) = R_{e \to \mathring{e}} (u_{\rm \Omega_{e},ha}^\perp|_{e})$,
	there holds 
	\begin{align*}
	\|u_{\rm \Omega_{e},ha}^\perp|_{e} -  I_{\mathcal{V},e}u_{\rm \Omega_{e},ha}^\perp|_{e} \|_{d_{e}}^2= a_{\Omega_{e}} ( H_{\mathring{e} \rightarrow \mOe}^{ \partial\Omega_{e}} R_{e \to \mathring{e}}(u_{\rm \Omega_{e},ha}^\perp|_{e}), H_{\mathring{e} \rightarrow \mOe}^{ \partial\Omega_{e}}R_{e \to \mathring{e}} (u_{\rm \Omega_{e},ha}^\perp|_{e})).
	\end{align*}
	As $u_{\rm \Omega_{e},ha}^\perp \in V_{\Omega_e}^{0}$ and the discrete harmonic extension $H_{\mathring{e} \rightarrow \mOe}^{ \partial\Omega_{e}} R_{e \to \mathring{e}}(u_{\rm \Omega_{e},ha}^\perp|_{e})$ minimizes the $| \cdot |_{a,\Omega_e}$-norm among all functions in $V_{\Omega_e}^{0}$ that equal $u_{\rm \Omega_{e},ha}^\perp$ on $\mathring{e}$, we conclude that
	\begin{equation*}
	\|u_{\rm \Omega_{e},ha}^\perp|_{e} -  I_{\mathcal{V},e}u_{\rm \Omega_{e},ha}^\perp|_{e} \|_{d_{e}}^2 \leq |u_{\rm \Omega_{e},ha}^\perp|_{a,\Omega_{e}}^{2}.
	\end{equation*}
\end{proof}

\paragraph*{\textbf{Combining the bounds of the harmonic and perpendicular parts}}
By invoking the stability result \cref{eq:stability2} and exploiting the estimate \cref{eq:special_est_aux1}-\cref{eq:special_est_aux2}, \cref{lem:bound_harm}, and \cref{lem:bound_perp}, we obtain the following result.
\begin{corollary}\label{coro:total_special_bound}
	We have
	\begin{equation*}
	| E_{\me \rightarrow \mOe} \left[(u - u_0)|_{e}\right] |_{a,\Omega_{e}}^2 \leq 2 \max\left\{\frac{c_{t,e}^{2}tol_{tr} (1 + c_{p,e}^{2}) }{c_{1}^{2}}, \frac{1}{tol_{dir}}\right\} |u|_{a,\Omega_{e}}^{2}.
	\end{equation*}
\end{corollary}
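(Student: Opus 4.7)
The plan is straightforward, since all the analytical heavy lifting has already been done in Propositions~\ref{lem:bound_harm} and~\ref{lem:bound_perp}; the corollary is essentially a bookkeeping exercise that combines these two bounds by exploiting the $a$-orthogonality of the decomposition.

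First, I would start from the splitting~\cref{eq:special_est_aux1}--\cref{eq:special_est_aux2}, which already breaks the quantity of interest into a harmonic part and a perpendicular part at the cost of a factor $2$. This step is immediate from the decomposition $u = u_{\rm \Omega_{e},ha} + u_{\rm \Omega_{e},ha}^{\perp}$, the linearity of $E_{\me \rightarrow \mOe}$ and $I_{\mathcal{V}}$, and the definitions of $u_{0,\rm \Omega_{e},ha}$ and $u_{0,\rm \Omega_{e},ha}^{\perp}$ from~\cref{eq:def:coarse_interp}.

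Next, I would apply Proposition~\ref{lem:bound_harm} to bound~\cref{eq:special_est_aux1} by $\frac{c_{t,e}^{2}tol_{tr}(1+c_{p,e}^{2})}{c_{1}^{2}}|u_{\rm \Omega_{e},ha}|_{a,\Omega_{e}}^{2}$ and Proposition~\ref{lem:bound_perp} to bound~\cref{eq:special_est_aux2} by $\frac{1}{tol_{dir}}|u_{\rm \Omega_{e},ha}^{\perp}|_{a,\Omega_{e}}^{2}$. Replacing each of the two constants by their maximum gives
\begin{equation*}
| E_{\me \rightarrow \mOe} \left[(u - u_0)|_{e}\right] |_{a,\Omega_{e}}^2 \leq 2\max\!\left\{\frac{c_{t,e}^{2}tol_{tr}(1+c_{p,e}^{2})}{c_{1}^{2}},\frac{1}{tol_{dir}}\right\}\!\left(|u_{\rm \Omega_{e},ha}|_{a,\Omega_{e}}^{2} + |u_{\rm \Omega_{e},ha}^{\perp}|_{a,\Omega_{e}}^{2}\right).
\end{equation*}

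Finally, I would close the argument by invoking the $a$-orthogonality of the decomposition, i.e.~the Pythagorean identity~\cref{eq:stability2}, which converts the sum of energies on the right-hand side into $|u|_{a,\Omega_{e}}^{2}$ and yields the claim. There is no real obstacle here: the only subtlety worth noting is that the factor $2$ from~\cref{eq:special_est_aux1}--\cref{eq:special_est_aux2} is a loss compared to what one might hope for from pure $a$-orthogonality, but it is unavoidable because $u_{0}$ itself is not $a$-orthogonally decomposed into harmonic and perpendicular components (the coarse space is constructed separately from the two eigenproblems), so a Cauchy--Schwarz type $(a+b)^{2}\le 2a^{2}+2b^{2}$ is needed before the orthogonality on the continuous level can be used.
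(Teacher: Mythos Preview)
Your proposal is correct and follows exactly the approach the paper takes: it combines the splitting~\cref{eq:special_est_aux1}--\cref{eq:special_est_aux2} with Propositions~\ref{lem:bound_harm} and~\ref{lem:bound_perp}, replaces the two constants by their maximum, and then applies the $a$-orthogonality~\cref{eq:stability2}. The paper compresses this into a single sentence preceding the corollary, but your write-up spells out the same steps in the same order.
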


\subsection{Complete bound of {\AH the} condition number}\label{subsec:complete bound}

By combining \cref{lem:coarse level local bound} and \cref{coro:total_special_bound}, we obtain the following bound for the condition number:
\begin{proposition}  \label{prop:total bound const stable decomp} 
	Let $u_{0}$ and $u_{i}$, $i=1,\hdots,M$ be defined as in \cref{eq:def:coarse_interp} and \cref{eq:def:local_contr}, respectively. Let further $m_{e}$ denote the maximal number of edges $e$ in a subdomain $\Omega_{i}$, $\nu:=\max_{e \in \Gamma}\{$number of subdomains $\Omega_{i}$ that satisfy $\Omega_{i} \cap \mathring{e} \neq \emptyset\}$, $\widetilde{\omega}= \max_{i=1,\hdots,M}\{$number of $\Omega_{e}$ such that $\Omega_{e}\cap \Omega_{i}\neq \emptyset\}$, $c_{t}:=\max_{e \in \Gamma} c_{t,e}$, and $c_{p}:=\max_{e \in \Gamma} c_{p,e}$, where $c_{t,e}$ and $c_{p,e}$ have been defined in \cref{eq:def:constants}. Then, the condition number can be bounded as $\kappa\left( \bsM_{AS}^{-1} \bsA \right) \leq C_{0}^2 \left( m+1 \right)$ with 
	\begin{align}\label{eq:total bound const stable decomp} 
		C_{0}^2 := \left(20 + 34m_{e}\nu\widetilde{\omega} \max\left\{\frac{c_{t}^{2}tol_{tr} (1 + c_{p}^{2}) }{c_{1}^{2}}, \frac{1}{tol_{dir}}\right\}\right)
%		\kappa\left( \bsM_{AS}^{-1} \bsA \right)  &\leq \left(20 + 34m_{e}\nu\widetilde{\omega} \max\left\{\frac{c_{t}^{2}tol_{tr} (1 + c_{p}^{2}) }{c_{1}^{2}}, \frac{1}{tol_{dir}}\right\}\right) \left( m+1 \right).
	\end{align}
	in the stable decomposition in \cref{ass:stabledecomp}. 
\end{proposition}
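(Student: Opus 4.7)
The plan is to combine the coarse-level and local-contribution estimates from \cref{lem:coarse level local bound} with the local extension bound of \cref{coro:total_special_bound}, and then plug the resulting bound for the stable decomposition constant $C_0^2$ into the abstract Schwarz estimate \cref{eq:cond_est}.

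First, I would add \cref{eq:bound coarse level} and \cref{eq:bound local contr} to obtain a bound of the form
\begin{equation*}
\sum_{i=0}^M a_{\pOmega_{i}}(u_{i},u_{i}) \leq 20\,|u|_{a,\Omega}^2 + 17\,m_{e} \sum_{i=1}^{M} \sum_{e \subset \partial\Omega_i} \bigl| E_{\me \rightarrow \mOe} \bigl[(u - u_0)|_{e}\bigr] \bigr|_{a,\Omega_{e}}^2,
\end{equation*}
so that what remains is to bound the double sum of edge-extension energies by a multiple of $|u|_{a,\Omega}^2$.

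Next, I would invoke \cref{coro:total_special_bound} termwise to replace $| E_{\me \rightarrow \mOe} [(u - u_0)|_{e}]|_{a,\Omega_{e}}^2$ by $2\max\{c_{t,e}^{2}tol_{tr}(1+c_{p,e}^{2})/c_{1}^{2},\, 1/tol_{dir}\}\,|u|_{a,\Omega_{e}}^{2}$, and then pass to the global constants $c_t$ and $c_p$. The remaining work is purely combinatorial: I need to show
\begin{equation*}
\sum_{i=1}^{M} \sum_{e \subset \partial\Omega_i} |u|_{a,\Omega_{e}}^2 \leq \nu\,\widetilde{\omega}\, |u|_{a,\Omega}^2.
\end{equation*}
For this I would reorder the double sum by edges: each edge $e$ appears in the sum exactly as many times as there are subdomains $\Omega_i$ containing $e$ in their boundary, which is at most $\nu$ by definition. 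Hence the left-hand side is bounded by $\nu \sum_{e \in \Gamma} |u|_{a,\Omega_e}^2$, and the remaining sum over oversampling domains is controlled using the finite-overlap constant $\widetilde{\omega}$: each subdomain $\Omega_i$ lies in at most $\widetilde{\omega}$ of the $\Omega_e$'s, so writing $|u|_{a,\Omega_e}^2 = \int_{\Omega_e}\alpha|\nabla u|^2\,dx$ and interchanging the sum with the integral yields the factor $\widetilde{\omega}$.

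Putting these pieces together gives
\begin{equation*}
\sum_{i=0}^M a_{\pOmega_{i}}(u_{i},u_{i}) \leq \Bigl(20 + 34\,m_{e}\,\nu\,\widetilde{\omega}\max\Bigl\{\tfrac{c_{t}^{2}tol_{tr}(1+c_{p}^{2})}{c_{1}^{2}},\tfrac{1}{tol_{dir}}\Bigr\}\Bigr) |u|_{a,\Omega}^2,
\end{equation*}
which identifies $C_0^2$ as claimed. The final condition number bound then follows directly from \cref{eq:cond_est}. I do not anticipate a serious obstacle: \cref{lem:coarse level local bound} and \cref{coro:total_special_bound} already carry all the analytical work, so the only thing to be careful with is the combinatorial counting of the overlaps of oversampling domains $\Omega_e$ and adjacent subdomains, which is precisely what the constants $\nu$ and $\widetilde{\omega}$ are designed to control.
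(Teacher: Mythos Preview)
Your proposal is correct and follows essentially the same route as the paper, which simply states that the result is obtained ``by combining \cref{lem:coarse level local bound} and \cref{coro:total_special_bound}'' without spelling out the combinatorial counting. One minor wording slip: in the finite-overlap argument you write ``each subdomain $\Omega_i$ lies in at most $\widetilde{\omega}$ of the $\Omega_e$'s,'' but $\widetilde{\omega}$ counts the $\Omega_e$'s that \emph{intersect} $\Omega_i$; the pointwise multiplicity bound still follows since any $\Omega_e$ containing a point $x\in\Omega_i$ necessarily intersects $\Omega_i$.
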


\section{Computational Realization} \label{sec:implementation}

\lila{In this section, we} briefly discuss the algorithmic steps for the algebraic construction of the different components of the two level Schwarz preconditioner~\cref{eq:two-level_schwarz} with the adaptive coarse space $X_{0}$ \cref{def:adaptive_coarse_space}. \lila{As $X_{\text{GDSW}} = X_{vert} \oplus X_{const} {\AH \subset X_0}$ (see discussion at the beginning of \cref{sect:new method}), many algorithmic building blocks are the same as in the standard GDSW preconditioner. We thus mainly focus here on the Dirichlet and transfer eigenvalue problems and refer to \cite{Heinlein:2016:PIT,Heinlein:2021:FAT} for details on the algebraic construction of GDSW preconditioners.}

\paragraph{The main algorithmic components of the two-level preconditioner~\cref{eq:two-level_schwarz}} 

\lila{Since $\bsE_{\mOip\to \Omega}=\bsR_{\Omega \to \mOip}^T$, $\bsE_{0}=\bsR_{0}^T$ and thus
$
\bsA_{\pOmega_{i}} = \bsR_{\Omega \to \mOip}\bsA \bsR_{\Omega \to \mOip}^T
$, and $
\bsA_{0} = \bsR_{0}\bsA \bsR_{0}^T,
$
 it is sufficient to construct the operators $\bsR_{\Omega \to \mOip}$, for {\AH$i=1,\ldots,M$, and $\bsR_{0}$.} }

\paragraph{Nonoverlapping domain decomposition} Let us assume that a nonoverlapping domain decomposition $\bar{\Omega} = \cup_i \bar{\Omega}_i$ is already given or can be obtained from the sparsity pattern of the matrix $\bsA$ using a graph partitioner, such as METIS~\cite{Karypis;1998;FHQ}. 

\paragraph{Restriction operators on the first level} \lila{For the first level, the operators $\bsR_{\Omega \to \mOip}$ extract the subvector corresponding to $\mOip$ when applied to a global {\AH FE} vector.}
Similarly, $\bsA_{\pOmega_{i}} = \bsR_{\Omega \to \mOip}\bsA \bsR_{\Omega \to \mOip}^T$ can be obtained by extracting the submatrix corresponding to $\mOip$ from $\bsA$. Hence, the $\bsR_{\Omega \to \mOip}$ never have to {\AH be} set up explicitly.

To determine the action of the  operators $\bsR_{\Omega \to \mOip}$, it is sufficient to identify the index sets of the subdomains $\mOip$. Starting from the nonoverlapping subdomains $\Oi$, the overlapping subdomains can be constructed recursively by adding layers of {\AH FE} nodes. 
This can, again, be performed based on the sparsity pattern of $\bsA$, making use of the fact that two {\AH FE} nodes with indices $k$ and $l$ share a nonzero {\AH off-diagonal} coefficient in $\bsA$ if they are adjacent.

\paragraph{Computation of the coarse basis functions} 
\lila{The coarse basis functions in terms of the {\AH FE} basis functions are stored in the columns of $\bsE_{0}$. 
If we have computed the interface values $\bsE_{0,\Gamma}$ of the coarse basis functions, the interior values are then computed as energy-minimizing extensions from the interface to the interior of the subdomains
$
\bsE_{0,I}
=
- \bsA_{II}^{-1} \bsA_{I \Gamma} \bsE_{0,\Gamma};
$
}
cf.~\cref{eq:harm_ext,eq:energy_min_ext} and the discussion in~\cref{subsect:adaptive coarse spaces}.

The steps above depend on a partition of the nodes into interior and interface nodes. This can be performed based on the multiplicity of the nodes in the nonoverlapping domain decomposition: {\AH FE} nodes which belong to only one subdomain are the interior nodes, whereas nodes which belong to two subdomains are the edge nodes, and nodes which belong to more than two subdomains are the vertices. 
Then, the interface nodes are the union of the edge and vertex nodes. Hence, the nodes can be categorized algebraically, just based on the nonoverlapping domain decomposition. 

\subsection{Algebraic construction of coarse edge functions from Dirichlet and transfer eigenvalue problem}

\paragraph{Construction of $\Oe$} Both eigenvalue problems require the oversampling domain $\Oe$ corresponding to each edge $e$. It can be constructed in a similar way as the overlapping subdomains: we start with the {\AH FE} nodes of interior of the edge $e$ and extend the set recursively layer-by-layer of elements; cf.~\cref{fig:channels_varying_length_comb}. As before, this can be done based on the sparsity pattern of $\bsA$. 

\paragraph{Dirichlet eigenvalue problem~\cref{eq:evp_diri}} The Dirichlet eigenvalue problems can be written as{\AH: Find $\left(\bsv,\mu\right) \in \mathbb{R}^{N_{\mathring{e}}} \times \mathbb{R}^+$, such that
$$
\bsS_{e} \, \bsv 
= 
\mu 
\bsA_{\mathring{e}\mathring{e}} \, \bsv 
%\quad \forall \chi \in V_{e}^{0},
$$
}
with matrices $\bsS_{e}$ and $\bsA_{\mathring{e}\mathring{e}}$; see also~\cref{eq:evp1}. The latter can easily be extracted from $\bsA$; it is the submatrix corresponding to the interior edge nodes. The Schur complement on the left{\AH -}hand side is given by
$
\bsS_{e} = \bsA_{\mathring{e}\mathring{e}} - \bsA_{\mathring{e}\widetilde{R}} \bsA_{\widetilde{R}\widetilde{R}}^{-1} \bsA_{\widetilde{R}\mathring{e}},
$
where the index set $\widetilde{R}$ corresponds to the interior nodes of $\Oe$ except for the interior nodes of $e$. As $\bsA_{\mathring{e}\mathring{e}}$, the matrices $\bsA_{\mathring{e}\widetilde{R}}$, $\bsA_{\widetilde{R}\widetilde{R}}$, and $\bsA_{\widetilde{R}\mathring{e}}$ can be extracted as submatrices of $\bsA$.

\paragraph{Transfer eigenvalue problem~\cref{eq:transfer_eigenproblem}} The transfer eigenvalue problem can be written in matrix form as{\AH: 
Find $\left(\bsv,\lambda \right) \in \mathbb{R}^{N_{\pOe}} \times \mathbb{R}^+$, such that	
$$
\bsT^T \, \bsA_{\mathring{e}\mathring{e}} \, \bsT \, \bsv 
= 
\lambda 
\frac{\alpha_{\min}}{N_{\pOe}}
\bsI_{\pOe} \bsv 
%\quad \forall \chi \in V_{\pOe},
$$
}
where $\bsT$ is the matrix corresponding to the transfer operator, $N_{\pOe}$ is the number of {\AH FE} nodes on $\pOe${\AH, and $\bsI_{\pOe} \in \mathbb{R}^{N_{\pOe} \times N_{\pOe}}$ is the identity matrix on the degrees of freedom of $\pOe$.} 
Therefore, let 
$$
\widehat{\bsT}
= 
\begin{pmatrix}
- \bsA_{\mathring{\Omega}_{e}\mathring{\Omega}_{e}}^{-1} \bsA_{\mathring{\Omega}_{e} \pOe} \\
\bsI_{\pOe}
\end{pmatrix},
$$
where $\mathring{\Omega}_{e}$ and $\pOe$ correspond to the interior nodes of $\Oe$ and the nodes on $\pOe$, respectively. {\AH This corresponds to the energy-minimizing extension from $\pOe$ to $\Oe$; cf.~\cref{eq:harm_ext}.}
Then, 
$
\bsT
=
\bsR_{\Oe \to \mathring{e}}
\widehat{\bsT},
$
where $\mathring{e}$ denotes the discrete interior of $e$.
Again, all matrices involved in the transfer eigenvalue problem, $\bsA_{\mathring{e}\mathring{e}}$, $\bsA_{\mathring{\Omega}_{e}\mathring{\Omega}_{e}}$, and $\bsA_{\mathring{\Omega}_{e} \pOe}$, can be extracted from $\bsA$, and $\bsR_{\Oe \to \mathring{e}}$ only requires the index sets of $\Oe$ and $\mathring{e}$.

\begin{remark}
	\lila{We can efficiently approximate the space spanned by the leading singular vectors of the transfer operator $T$ by using randomization as suggested in~\cite{BuhSme18} in the context of localized model reduction. 
	This is helpful since the transfer eigenvalue problem, which is defined on the nodes of $\pOe$, may be significantly larger than the Dirichlet eigenvalue problem, which is defined on the interior nodes of $e$.}
\end{remark}

\paragraph{Orthogonaliztion of the edge functions} Let us remark that 
$
V^0_{\Omega_{e}}|_e 
\cap
V_{\rm \Omega_{e},ha}|_e
\neq 
\emptyset.
$
Hence, even though the two spaces are $a$-orthogonal on $\Oe$, when restricted to an edge, the functions from the two spaces may not be linearly independent anymore. In order to remove (almost) linearly dependent edge functions, we finally orthogonalize the edge functions for each edge using a proper orthogonal decomposition (POD) \cite{BerHol93,Sir87}.

\section{Numerical results}
\label{sec:results}

In this section, we present numerical results \lila{demonstrating} the robustness of our new adaptive coarse space in~\cref{def:adaptive_coarse_space}. In particular, we consider the heterogeneous model problem~\cref{eq:var} with coefficient function $\alpha$ introduced in~\cref{sect:problem} on the computational domain $\Omega = [0,1]^2$. We use piecewise linear finite elements on a regular mesh to discretize and obtain~\cref{eq:alg}. 
Moreover, we use the preconditioned conjugate gradient (PCG) method 
and stop the iteration once $\| \bsM^{-1}r^{(k)} \| / \| \bsM^{-1}r^{(0)} \| < 10^{-10}$. The overlapping domain decomposition into $\{\Omega^{\prime}_{i}\}_{i=1}^{M}$ is done in a structured way, resulting in square subdomains.  
\lila{
	{\AH In all numerical experiments,}
we will use an algebraic overlap of $1h$, that is, we extend the nonoverlapping subdomains by one layer of finite element nodes to obtain the overlapping subdomains. {\AH Moreover, we keep the threshold for the selection of eigenfunctions in the Dirichlet eigenvalue problem $tol_{dir} = 10^{-3}$ and the tolerance for the POD orthogonalization $tol_{o} = 10^{-5}$ fixed. The threshold for the transfer eigenvalue problem $tol_{tr}$ is chosen as $10^{5}$ in most cases and only varied in a few cases as reported in the tables.} The algorithms have been implemented and run using MATLAB\_R2021a.}

We compare the adaptive coarse space proposed in this paper in~\cref{def:adaptive_coarse_space} and variants of it with the classical two-level GDSW coarse space and 
AGDSW
coarse spaces. Even though our theory holds for general coefficient functions, we are mostly interested in testing our coarse space for difficult configurations, where standard coarse spaces fail. 
Let us briefly comment on the design of the coefficient functions. We concentrate on discontinuous coefficient functions instead of continuously varying coefficient functions since large coefficient jumps at the domain decomposition interface have a stronger influence on the convergence; this is also reflected by the results in~\cref{sec:results:spe10}. Furthermore, it is well-known that coefficient jumps inside the subdomains have only a minor influence; see, for example~\cite{Gippert:2012;AFE}. 
It has also been observed that those examples where the high coefficient components do not touch the Dirichlet boundary of the domain are the more difficult ones; this behavior is particularly evident for low numbers of subdomains. 
Therefore, except for 
the realistic coefficient function in~\cref{sec:results:spe10}, we set one layer of elements with low coefficient at the boundary of the domain.

\subsection{A first model problem\lila{: Channels of varying lengths}} \label{sec:results:first}

\begin{figure}
\begin{center}
\includegraphics[height=0.3\textwidth]{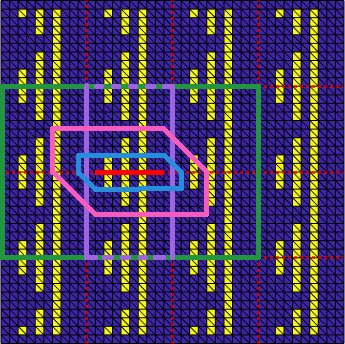}
\hspace*{0.05\textwidth}
\includegraphics[height=0.3\textwidth]{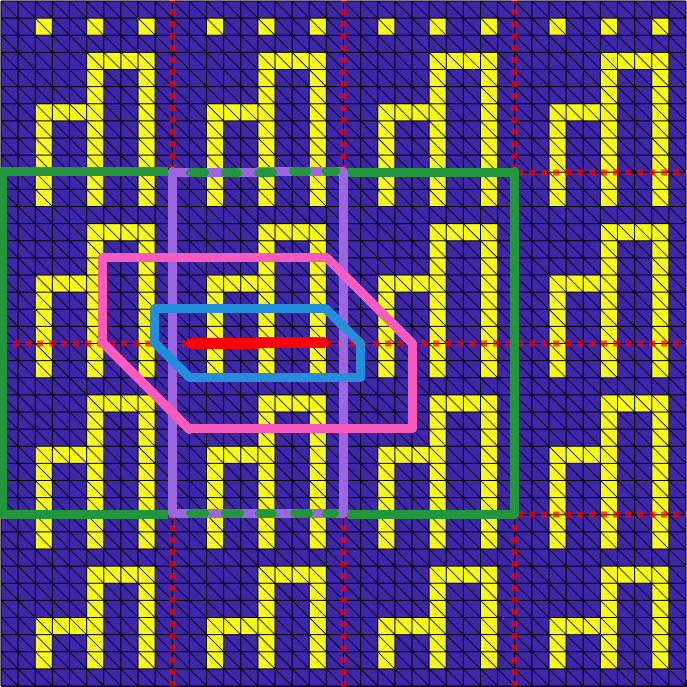}
\end{center}
\caption{Heterogeneous coefficient functions with high-coefficient channels of varying lengths (\textbf{left}) and comb-type high-coefficient components (\textbf{right}) cutting the edges on a $4 \times 4$ subdomains with $H/h = 10$ and different choices of oversampling domains $\Omega_{e}$: the domain decomposition interface is depicted as dashed red lines, the elements with $\alpha = \alpha_{\max}$ are colored yellow, and the elements with $\alpha = \alpha_{\min}$ are colored dark blue. In addition, the oversampling domain $\Omega_e^{2h}$ is depicted in light blue, $\Omega_e^{5h}$ in pink, $\Omega_e^{H}$ in light green, and the domain for the extensions in AGDSW eigenvalue problems in light purple; the  discrete interior edge $\mathring{e}$ is plotted in solid red. 
\label{fig:channels_varying_length_comb}}
\end{figure}

\begin{table}
\begin{center}
\begin{tabular}{|l||l@{\hspace{6pt}}l||r@{\hspace{2pt}}c@{\hspace{2pt}}r|r@{\hspace{6pt}}r|}
	\hline
	$X_0$                                  & $\Omega_{\rm out}$               & $tol_{tr}$   & \multicolumn{3}{c|}{$\dim X_0$} & \multicolumn{1}{c}{$\kappa$} & \multicolumn{1}{c|}{\# its.} \\ \hline\hline
	$X_{\text{GDSW}}$                      & --                               & --           & $33$ & / &                 $33$ &             $2.7 \cdot 10^5$ &                        $118$ \\ \hline
	$X_{\text{AGDSW}}$                     & --                               & --           & $57$ & / &                 $57$ &                        $7.4$ &                         $24$ \\ \hline
	\multirow{3}{*}{$X_{\text{VCD}}$}      & $\Omega_e^{2h}$                  & --           & $33$ & / &                 $33$ &             $2.7 \cdot 10^5$ &                        $118$ \\
	                                       & $\Omega_e^{5h}$                  & --           & $57$ & / &                 $57$ &                        $7.2$ &                         $24$ \\
	                                       & $\Omega_e^H$                     & --           & $57$ & / &                 $57$ &                        $7.2$ &                         $24$ \\ \hline
	\multirow{3}{*}{$X_{\text{VCT}-l^2}$}  & $\Omega_e^{2h}$                  & $10^{5}$     & $93$ & / &                $105$ &                        $7.6$ &                         $24$ \\
	                                       & $\Omega_e^{5h}$                  & $10^{5}$     & $57$ & / &                 $66$ &                       $19.0$ &                         $36$ \\
	                                       & $\Omega_e^H$                     & $10^{5}$     & $57$ & / &                 $66$ &                       $19.0$ &                         $36$ \\ \hline
	\multirow{4}{*}{$X_{\text{VCDT}-l^2}$} & \multirow{2}{*}{$\Omega_e^{2h}$} & $10^{5}$     & $93$ & / &                $105$ &                        $7.6$ &                         $24$ \\
	                                       &                                  & $\bf 10^{6}$ & $57$ & / &                 $69$ &                        $7.6$ &                         $24$ \\
	                                       & $\Omega_e^{5h}$                  & $10^{5}$     & $57$ & / &                 $90$ &                        $7.2$ &                         $25$ \\
	                                       & $\Omega_e^H$                     & $10^{5}$     & $57$ & / &                 $90$ &                        $7.2$ &                         $24$ \\ \hline
	\multirow{3}{*}{$X_{\text{VCDT}-a}$}   & $\Omega_e^{2h}$                  & $10^{5}$     & $57$ & / &                 $69$ &                        $7.5$ &                         $24$ \\
	                                       & $\Omega_e^{5h}$                  & $10^{5}$     & $57$ & / &                 $72$ &                        $7.2$ &                         $24$ \\
	                                       & $\Omega_e^H$                     & $10^{5}$     & $57$ & / &                 $69$ &                        $7.3$ &                         $24$ \\ \hline
\end{tabular}
\end{center}
\caption{Numerical results for the coefficient function shown in~\cref{fig:channels_varying_length_comb} (left), $4 \times 4$ subdomains with $H/h = 10$, and $\alpha_{\max} = 10^6$ and $\alpha_{\min} = 1$ using different coarse spaces; in case of the novel coarse spaces $X_{\text{VCDT}-*}$, we vary the size of $\Omega_e$: two layers of finite elements ($\Omega_e^{2h}$), five layers of finite elements ($\Omega_e^{5h}$), or one layer of subdomains ($\Omega_e^{H}$) around $e$; see also~\cref{fig:channels_varying_length_comb} (left); for the $\Omega_e^{2h}$ case, we vary the threshold $tol_{tr}$. We report the coarse space dimension (final dimension / dimension before POD orthogonalization), the estimated condition number, and the iteration count. Non-default tolerances $tol_{tr}$ are marked in \textbf{bold face}.
\label{tab:channels_varying_length}}
\end{table}

As a first model problem, we consider the coefficient function shown in~\cref{fig:channels_varying_length_comb} (left){\AH .} 
The results are listed in~\cref{tab:channels_varying_length}, \lila{where here and elsewhere the reported condition number is an estimate from the Lanczos process within PCG.} They clearly indicate that the condition number for the classical coarse space is not robust with respect to the contrast of the coefficient function; in fact, the condition number of $2.7 \cdot 10^5$ is close to the contrast $\alpha_{\max}/\alpha_{\min} = 10^6$ itself. Due to the moderate number of subdomains, the resulting iteration count is still {\AH moderate, that is,} $118$.

Both the AGDW and the new coarse space are robust, resulting in a condition number below $10$ and an iteration count of $24$ or $25$. This is the case for both the fully algebraic variant ($X_{\text{VCDT}-l^2}$) and the variant that uses the $a$ bilinear form ($X_{\text{VCDT}-a}$). \lila{The latter two}
yield comparable results, also when varying the size of $\Omega_e$ from two layers of elements ($\Omega_e^{2h}$), to five layers of elements ($\Omega_e^{5h}$) and one layer of subdomains ($\Omega_e^{H}$) around the edge $e$; cf.~\cref{fig:channels_varying_length_comb} (left) for plots of the different choices of $\Omega_e$.

In~\cref{tab:channels_varying_length}, we also provide results for only using one of the two eigenvalue problems, that is either only the Dirichlet eigenvalue problem ($X_{\text{VCD}}$) or only the transfer eigenvalue problem ($X_{\text{VCT}-l^2}$). We observe that, once $\Oe$ gets too small ($\Oe^{2h}$), the Dirichlet eigenvalue problem fails to detect the high-coefficient channels. The resulting coarse space just corresponds to the standard GDSW coarse space. On the other hand, for this example, the transfer eigenvalue problem alone already yields a robust coarse space.

\begin{figure}
\centering
\includegraphics[width=0.32\textwidth]{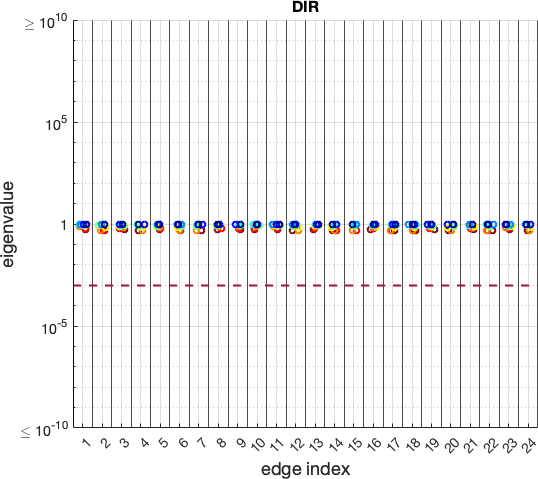}
\includegraphics[width=0.32\textwidth]{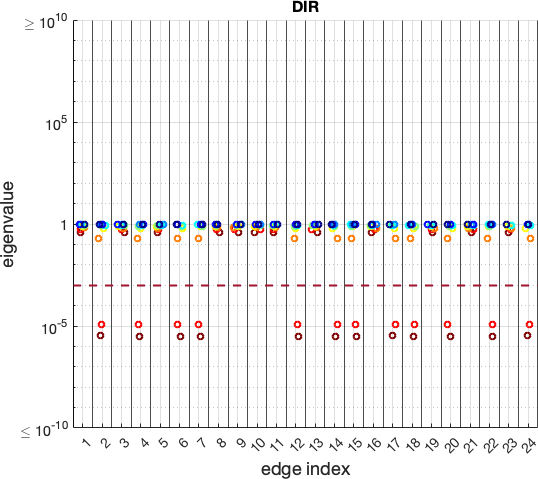}
\includegraphics[width=0.32\textwidth]{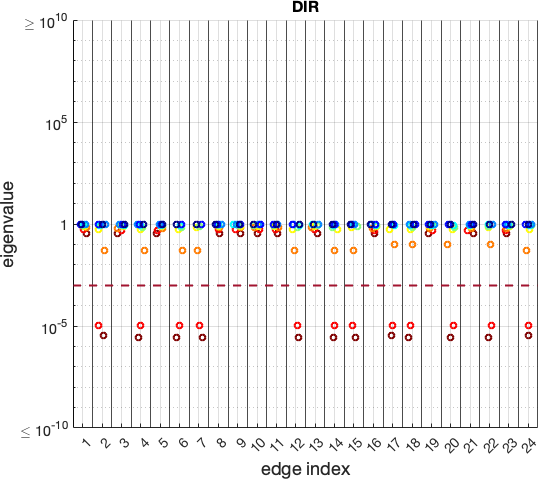}

\includegraphics[width=0.32\textwidth]{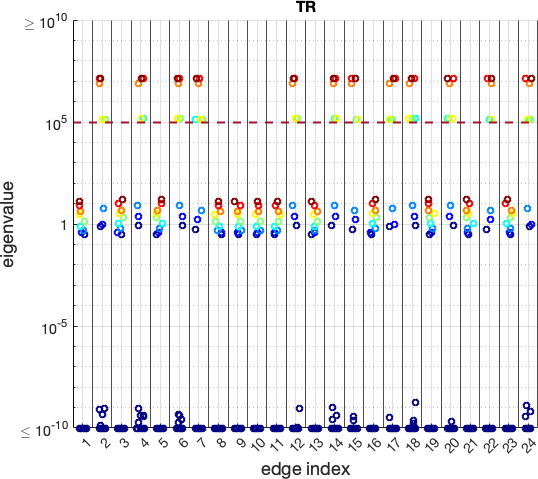}
\includegraphics[width=0.32\textwidth]{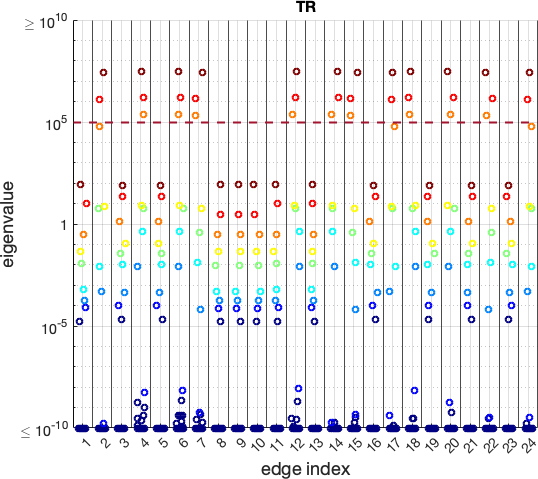}
\includegraphics[width=0.32\textwidth]{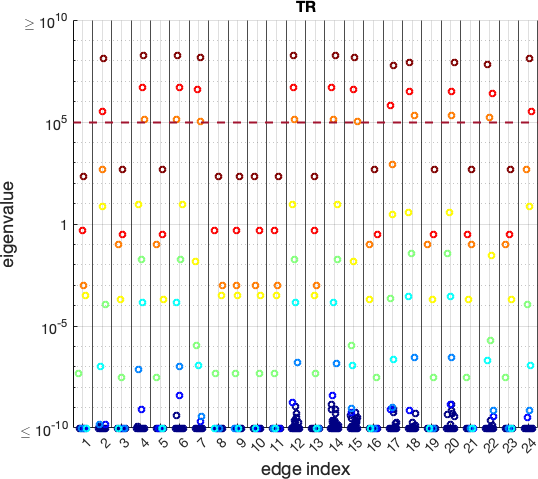}
\caption{Eigenvalue distributions for the Dirichlet (top) and transfer (bottom) eigenvalue problems for configuration in~\cref{fig:channels_varying_length_comb} (left) for different sizes of $\Omega_e$: $\Omega_e^{2h}$ (left), $\Omega_e^{5h}$ (middle), and $\Omega_e^H$ (right); see also~\cref{fig:channels_varying_length_comb} (left). Threshold of $10^{-3}$ and $10^{-5}$ for the Dirichlet and transfer eigenvalue problems are plotted as dashed red lines. Note that all eigenvalues below $10^{-10}$ or larger than $10^{10}$ are plotted as $10^{-10}$ or $10^{10}$, respectively. The eigenvalues are colored based on their ordering, from the largest eigenvalue (red) to the smallest eigenvalue (blue).
\label{fig:ev_distribution_omega_e}}
\end{figure}

\Cref{fig:ev_distribution_omega_e} shows the eigenvalue distribution for the Dirichlet and transfer eigenvalue problems of $X_{\text{VCDT}-l^2}$ for varying the size of $\Omega_e$. We observe that, for a smaller $\Omega_e$, the eigenvalues are much more concentrated in certain parts of the spectrum. For a larger choice of $\Omega_e$, they are are more distributed across the spectrum. The \lila{eigenmodes necessary for obtaining robustness} can be identified by the gap in the spectrum, which should contain the threshold. Choosing a larger threshold for the Dirichlet eigenvalue problem and smaller threshold for the transfer eigenvalue problem, respectively, increases the chance of catching all relevant eigenfunctions, however, it may result in a larger coarse space dimension than necessary. 

The case of $\Omega_e^{2h}$ shows that the tolerance has to be increased from $10^5$ to $10^6$ in order to omit those eigenmodes which result in a too large coarse space dimension; this is also reflected in the results in~\cref{tab:channels_varying_length}.

Furthermore, we observe that there is a significant amount of linearly dependent edge basis functions for the new coarse space; this results from the fact that \lila{we combine the constant function and eigenfunctions from the Dirichlet and transfer eigenvalue problem}. Due to POD orthogonalization the total coarse space dimension is reduced by $12$--$15$. {\AH Here, the resulting coarse space dimension of $57$ is always optimal, which can be explained as follows:
For each edge, we need at least one (constant) function, and in case of channels cutting the edge, at least as many functions as channels. This yields $12 + 3 \times 12 = 48$ edge functions. In addition to that, we obtain one function for each of the $9$ vertices, resulting in a dimension of $57$.}

\begin{table}
\begin{center}
\begin{tabular}{|l|l@{\hspace{6pt}}l||r@{\hspace{2pt}}c@{\hspace{2pt}}r|r@{\hspace{6pt}}r|}
	\hline
	$\alpha_{\min}$            & $X_0$                 & $tol_{tr}$ & \multicolumn{3}{c|}{$\dim X_0$} & \multicolumn{1}{c}{$\kappa$} & \multicolumn{1}{c|}{\# its.} \\ \hline\hline
	\multirow{2}{*}{$10^{-2}$} & $X_{\text{GDSW}}$     & --         & $33$ &  /   &              $33$ &             $2.7 \cdot 10^7$ &                        $142$ \\
	                           & $X_{\text{VCDT}-l^2}$ & $10^{4}$   & $57$ &  /   &              $93$ &                        $7.3$ &                         $25$ \\ \hline
	\multirow{2}{*}{$1$}       & $X_{\text{GDSW}}$     & --         & $33$ &  /   &              $33$ &             $2.7 \cdot 10^5$ &                        $118$ \\
	                           & $X_{\text{VCDT}-l^2}$ & $10^{4}$   & $57$ &  /   &              $93$ &                        $7.2$ &                         $25$ \\ \hline
	\multirow{2}{*}{$10^{2}$}  & $X_{\text{GDSW}}$     & --         & $33$ &  /   &              $33$ &             $2.7 \cdot 10^3$ &                         $95$ \\
	                           & $X_{\text{VCDT}-l^2}$ & $10^{4}$   & $57$ &  /   &              $69$ &                        $8.5$ &                         $25$ \\ \hline
\end{tabular}
\end{center}
\caption{Numerical results for the coefficient function shown in~\cref{fig:channels_varying_length_comb} (left) with varying $\alpha_{\min}$, $4 \times 4$ subdomains with $H/h = 10$, and $\alpha_{\max} = 10^6$ and $\alpha_{\min} = 1$ using the classical $X_{\text{GDSW}}$ coarse space and the adaptive $X_{\text{VCDT}-l^2}$ coarse space for $\Omega_e = \Omega_e^{5h}$; see also~\cref{fig:channels_varying_length_comb} (left). We report the coarse space dimension (final dimension / dimension before POD orthogonalization), the estimated condition number, and the iteration count.
\label{tab:channels_varying_length:alpha_min}}
\end{table}

Finally, we report result{\AH s} for varying values of $\alpha_{\min}$ in~\cref{tab:channels_varying_length:alpha_min}. We observe that, for the classical GDSW coarse space, condition number and iteration count deteriorate; in particular, the condition number is in the order of the coefficient contrast. For the $X_{\text{VCDT}-l^2}$ coarse space, the results are robust and independent of $\alpha_{\min}$.

\subsection{Algebraic and non-algebraic variants of the transfer eigenvalue problem} 

\begin{figure}
\centering
\begin{minipage}{0.24\textwidth} \centering
\includegraphics[width=\textwidth,height=\textwidth]{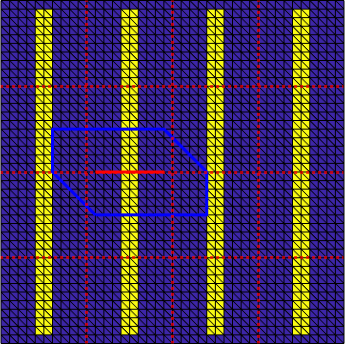}
\end{minipage}
\begin{minipage}{0.24\textwidth} \centering
%left hand side \\
\includegraphics[width=\textwidth,trim=35mm 89mm 13mm 61mm,clip]{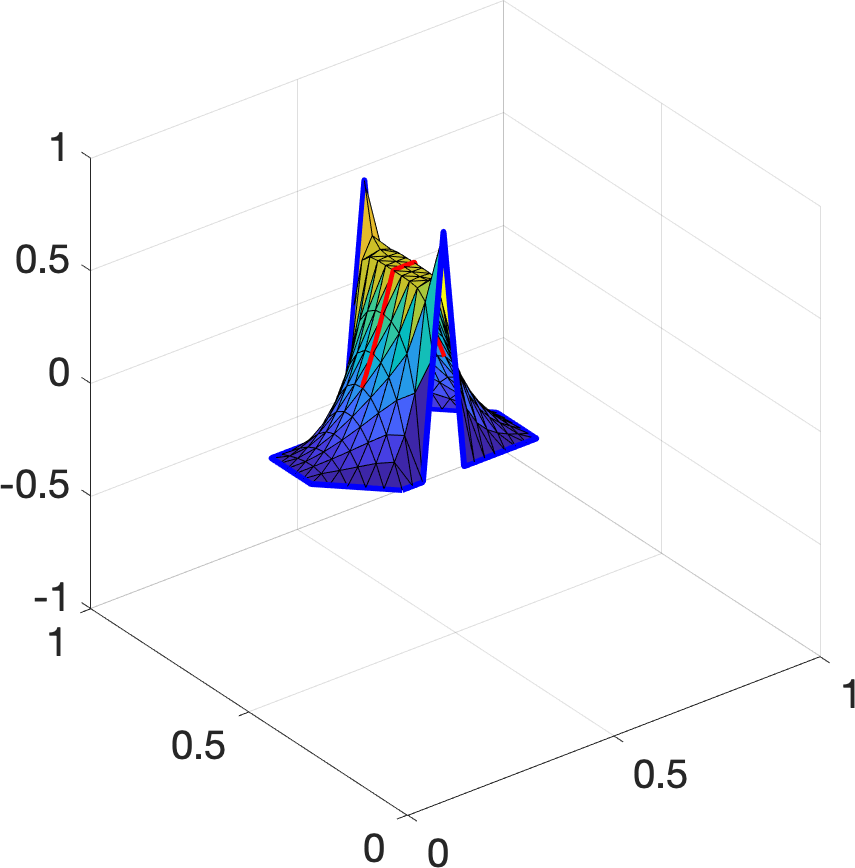}\\
\includegraphics[width=\textwidth,trim=35mm 89mm 13mm 61mm,clip]{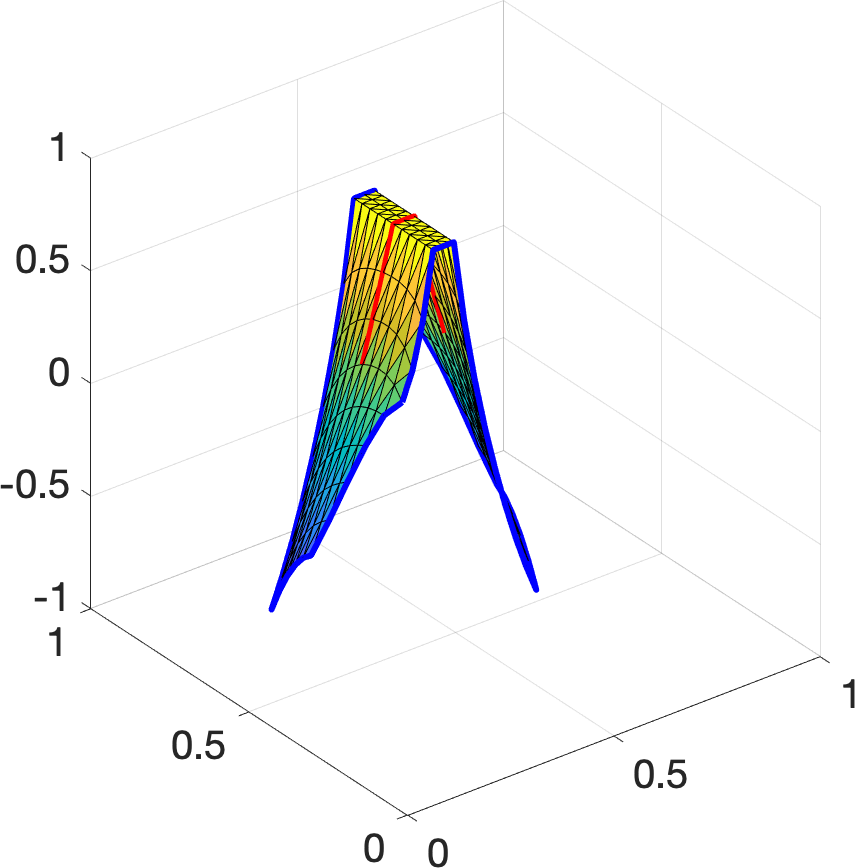}
\end{minipage}
\begin{minipage}{0.24\textwidth} \centering
%right hand side \\
\includegraphics[width=\textwidth,trim=35mm 89mm 13mm 61mm,clip]{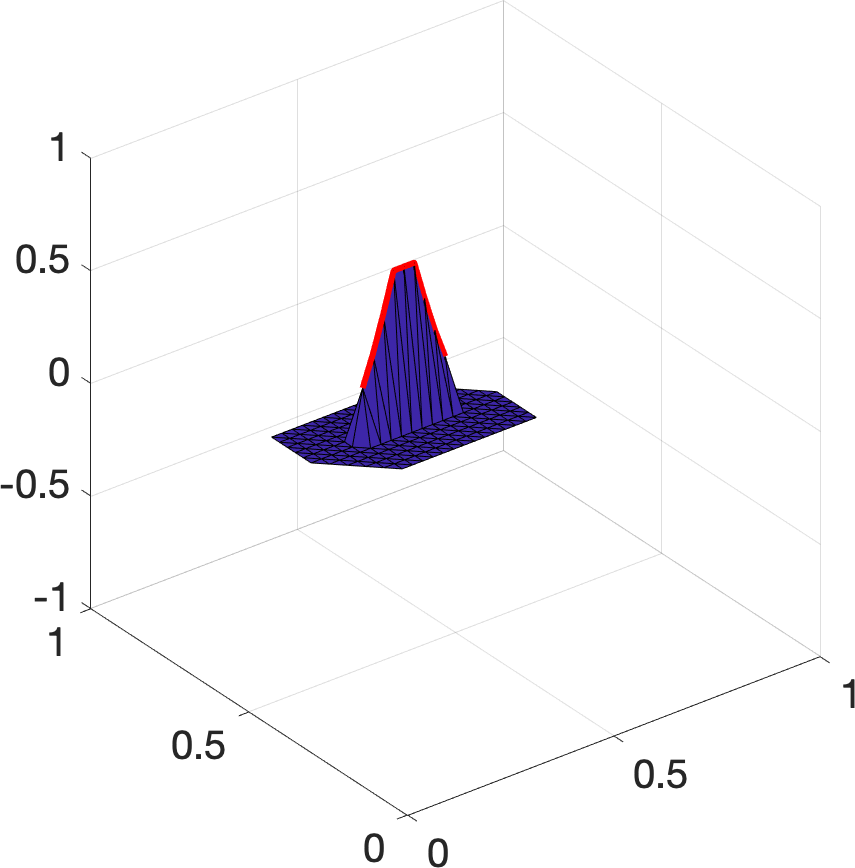} \\
\includegraphics[width=\textwidth,trim=35mm 89mm 13mm 61mm,clip]{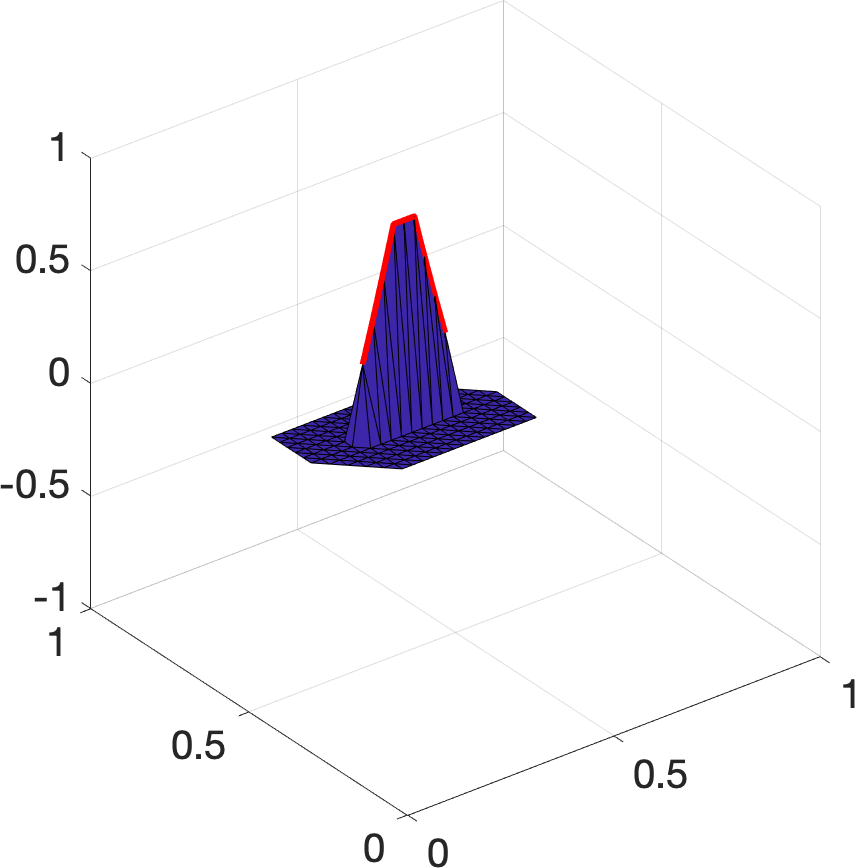}
\end{minipage}
\begin{minipage}{0.24\textwidth} \centering
%basis function \\
\includegraphics[width=\textwidth,trim=35mm 89mm 13mm 61mm,clip]{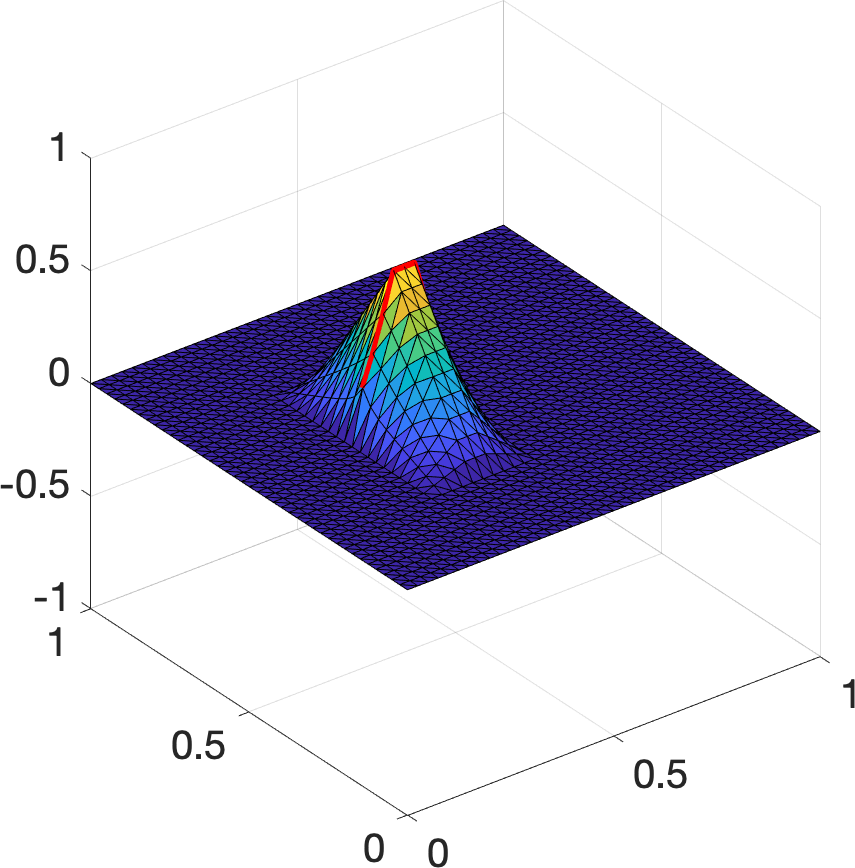} \\
\includegraphics[width=\textwidth,trim=35mm 89mm 13mm 61mm,clip]{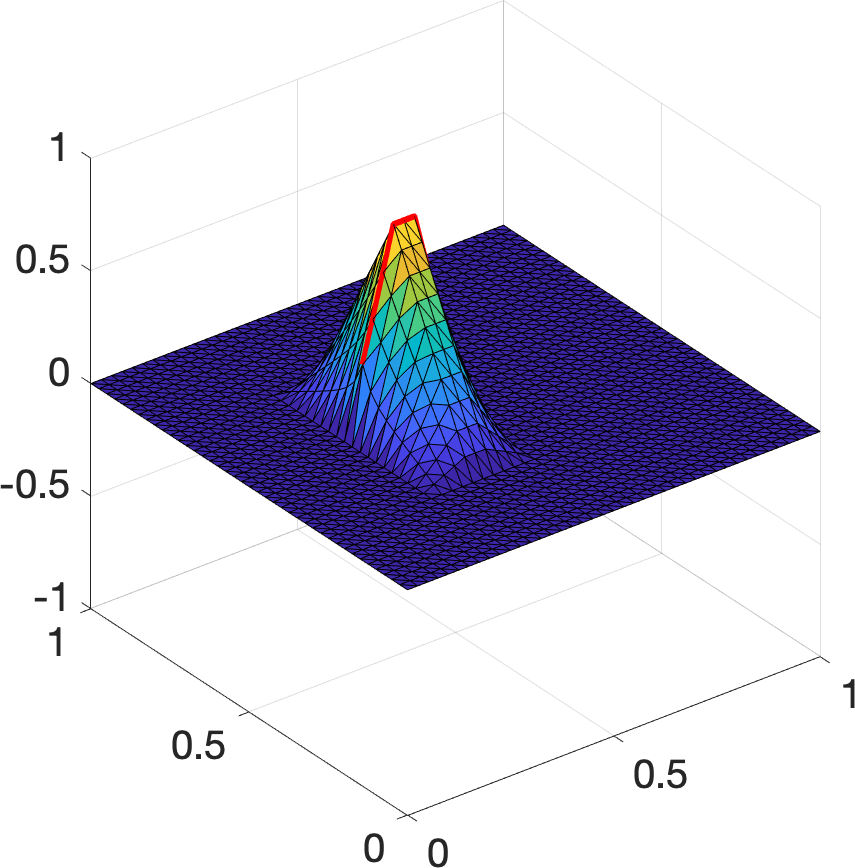}
\end{minipage}

\caption{Simple coefficient distribution with one vertical channel cutting each horizontal edge (left/1st column); the discrete interior edge $\mathring{e}$ and $\partial\Omega_e$ are plotted as solid red and blue lines, respectively. Visualization of the different functions arising in the transfer eigenvalue problem for $X_{\text{VCDT}-l^2}$ (top) and $X_{\text{VCDT}-a}$ (bottom):$H_{e \rightarrow \mOe}^{\partial\Omega_{e}} \varphi_{e}^{(i)}$ (2nd column), $E_{e \to \mOe} \varphi_{e}^{(i)}$ (3rd column), and $H_{\Gamma \rightarrow \Omega} (E_{e \to \Gamma} \varphi_{e}^{(i)})$ (4th column).
\label{fig:comp_evps}}
\end{figure}

It is remarkable that, as observed in~\cref{sec:results:first}, both the $X_{\text{VCDT}-l^2}$ and $X_{\text{VCDT}-a}$ variants yield comparable results when investigating the eigenvalue problems in more detail. In~\cref{fig:comp_evps}, we plot the functions appearing in the eigenvalue problem. Even though the traces of the chosen eigenfunctions on $\partial\Omega_e$ (corresponding to the channel) differ significantly due to the different inner products, the trace on $e$ is almost the same. Hence, the resulting coarse basis functions, which are computed by extending the edge values into the interior, span almost the same space.

\subsection{Coarse space reduction by enlarging $\boldsymbol{\Omega_e}$}

\begin{table}
\begin{center}
\begin{tabular}{|l||l@{\hspace{6pt}}l||r@{\hspace{2pt}}c@{\hspace{2pt}}r|r@{\hspace{6pt}}r|}
	\hline
	$X_0$                                  & $\Omega_{\rm out}$ & $tol_{tr}$   & \multicolumn{3}{c|}{$\dim X_0$} & \multicolumn{1}{c}{$\kappa$} & \multicolumn{1}{c|}{\# its.} \\ \hline\hline
	$X_{\text{GDSW}}$                      & --                 & --           & $33$ & / &                 $33$ &                       $24.1$ &                         $31$ \\ \hline
	\multirow{4}{*}{$X_{\text{AGDSW}}$}    & $\Omega_e^{2h}$    & --           & $57$ & / &                 $57$ &                        $7.1$ &                         $24$ \\
	                                       & $\Omega_e^{5h}$    & --           & $45$ & / &                 $45$ &                       $12.6$ &                         $26$ \\
	                                       & $\Omega_e^{H}$     & --           & $33$ & / &                 $33$ &                       $24.1$ &                         $31$ \\
	                                       & --                 & --           & $33$ & / &                 $33$ &                       $24.1$ &                         $31$ \\ \hline
	\multirow{3}{*}{$X_{\text{VCDT}-l^2}$} & $\Omega_e^{2h}$    & $\bf 10^{6}$ & $57$ & / &                 $69$ &                        $7.1$ &                         $24$ \\
	                                       & $\Omega_e^{5h}$    & $10^{5}$     & $45$ & / &                 $57$ &                       $17.1$ &                         $33$ \\
	                                       & $\Omega_e^{H}$     & $10^{5}$     & $33$ & / &                 $57$ &                       $24.1$ &                         $31$ \\ \hline
\end{tabular}
\end{center}
\caption{Numerical results for the coefficient function shown in~\cref{fig:channels_varying_length_comb} (right), $4 \times 4$ subdomains with $H/h = 10$, and $\alpha_{\max} = 10^6$ and $\alpha_{\min} = 1$ using different coarse spaces; in case of the novel coarse spaces $X_{\text{VCDT}-*}$, we vary the size of $\Omega_e$: two layers of finite elements ($\Omega_e^{2h}$), five layers of finite elements ($\Omega_e^{5h}$), or one layer of subdomains ($\Omega_e^{H}$) around $e$; see also~\cref{fig:channels_varying_length_comb} (right). We report the coarse space dimension (final dimension / dimension before POD orthogonalization), the estimated condition number, and the iteration count. Non-default tolerances $tol_{tr}$ are marked in \textbf{bold face}.
\label{tab:comb}}
\end{table}

In~\cref{sec:results:first}, we already observed the influence of the size of $\Omega_e$ on the spectra of the eigenvalue problems. Here, we discuss a second example, which is visualized in~\cref{fig:channels_varying_length_comb} (right), where the effect is even stronger and better interpretable. 

It can be observed that a single edge function{\AH, and thus a coarse space of dimension $33$,} is sufficient for robustness for this example because there is only a single connected high coefficient component cutting each edge. Consequently, in the results in~\cref{tab:comb}, even the classical GDSW coarse yields good results. This can only be detected by the eigenvalue problem if $\Omega_e$ is large enough to cover this whole high-coefficient component. If $\Omega_e$ is too small, the high coefficient component appears as either three, two, or one component cutting the edge for $\Omega_e^{2h}$, $\Omega_e^{5h}$, or $\Omega_e^{H}$, respectively; cf.~\cref{fig:channels_varying_length_comb} (right). Also {\AH when} using the two subdomains adjacent to the edge $e$ as the domain for the energy minimizing extension, as in the default AGDSW approach, the component is detected as a whole.

This is also reflected clearly in the numerical results in~\cref{tab:comb}. There are $12$ edges cut by such a high-coefficient component. While increasing the size of $\Omega_e$ from $\Omega_e^{2h}$ to $\Omega_e^{5h}$ or $\Omega_e^{H}$ the coarse space dimension reduces by $12$ or $24$, respectively. The same behavior can be observed for AGDSW, when using $\Omega_e^{2h}$, $\Omega_e^{5h}$, and $\Omega_e^{H}$ as the extension domain. This effect has already been reported for similar cases in~\cite{Heinlein:2018:AGD,Heinlein:2019:AGD}.

\subsection{Random coefficient distributions} \label{sec:results:random}

\begin{figure}
\begin{center}
\includegraphics[height=0.32\textwidth]{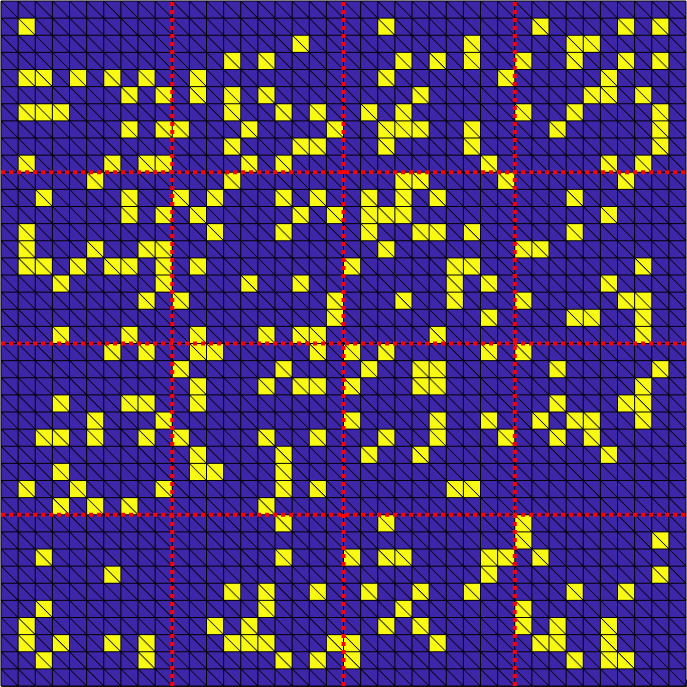}
\includegraphics[height=0.32\textwidth]{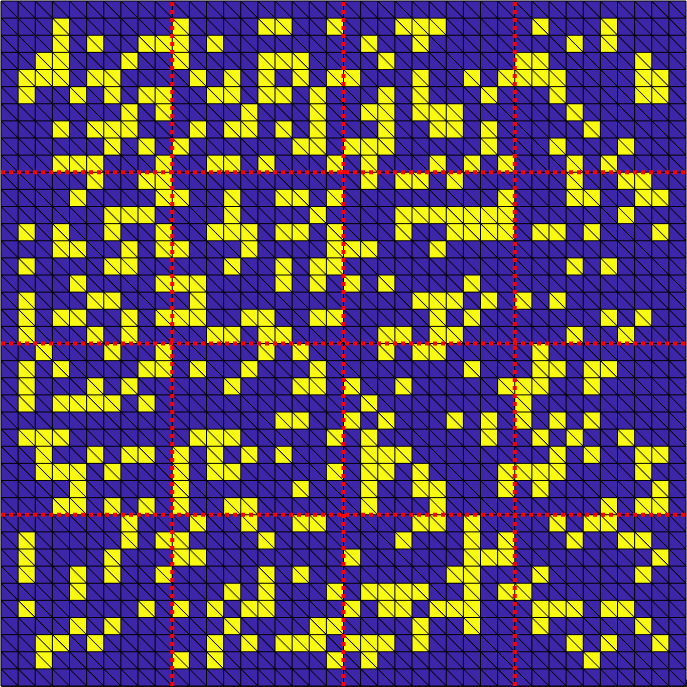}
\includegraphics[height=0.32\textwidth]{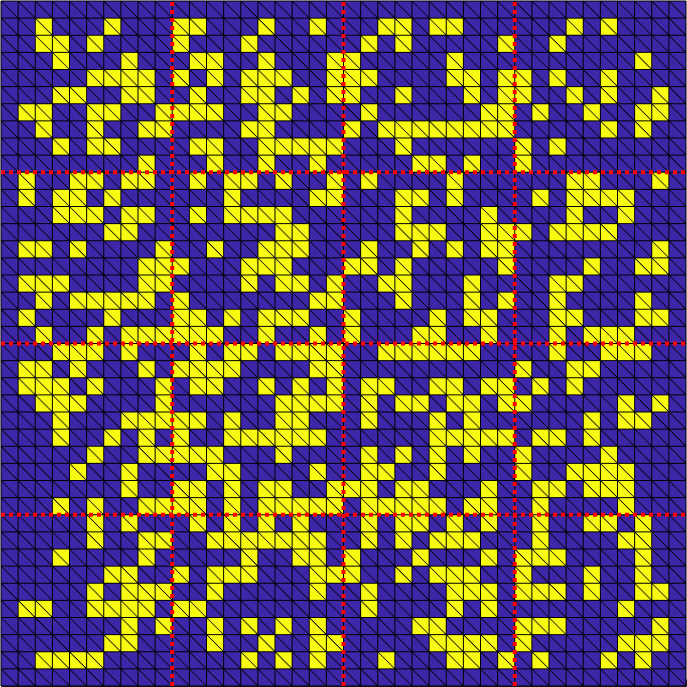}
\end{center}
\caption{Exemplary random heterogeneous coefficient functions with $20\,\%$ (left), $30\,\%$ (middle), and $40\,\%$ (right) elements with high coefficient on a $4 \times 4$ subdomains with $H/h = 10$: the domain decomposition interface is depicted as dashed red lines, the elements with $\alpha = \alpha_{\max}$ are colored yellow, and the elements with $\alpha = \alpha_{\min}$ are colored dark blue.
\label{fig:random}}
\end{figure}

\begin{table}
\begin{center}
\begin{tabular}{|l||l|l||r@{\hspace{1pt}}r@{\hspace{2pt}}c@{\hspace{2pt}}r@{\hspace{1pt}}r|r@{\hspace{1pt}}r@{\hspace{6pt}}r@{\hspace{1pt}}r|}
	\hline
	$\alpha_{\rm max}$        & $\Omega_{\rm out}$               &   $tol_{tr}$ &      \multicolumn{5}{c|}{$\dim X_0$}      &     \multicolumn{2}{c}{$\kappa$}      & \multicolumn{2}{c|}{\# its.} \\ \hline\hline
	\multirow{3}{*}{$20\,\%$} & $\Omega_e^{2h}$                  &     $10^{5}$ &  $85.7$ & $(105)$ & / & $128.5$ & $(150)$ &           $12.1$ &           $(36.0)$ & $30.1$ & $(36)$              \\
	                          & $\Omega_e^{5h}$                  &     $10^{5}$ &  $62.6$ & $(77)$  & / & $127.3$ & $(158)$ &            $9.1$ &           $(27.9)$ & $27.8$ & $(32)$              \\
	                          & $\Omega_e^{H}$                   &     $10^{5}$ &  $62.4$ & $(74)$  & / & $122.3$ & $(142)$ &            $8.6$ &           $(11.6)$ & $27.5$ & $(31)$              \\ \hline
	\multirow{3}{*}{$30\,\%$} & $\Omega_e^{2h}$                  &     $10^{5}$ & $121.6$ & $(143)$ & / & $154.8$ & $(176)$ &           $20.6$ &           $(86.3)$ & $30.0$ & $(41)$              \\
	                          & $\Omega_e^{5h}$                  &     $10^{5}$ &  $70.6$ & $(81)$  & / & $122.7$ & $(143)$ &           $10.6$ &           $(25.5)$ & $27.4$ & $(34)$              \\
	                          & $\Omega_e^{H}$                   &     $10^{5}$ &  $62.9$ & $(74)$  & / & $122.4$ & $(143)$ &           $13.3$ &           $(38.4)$ & $27.6$ & $(37)$              \\ \hline
	\multirow{6}{*}{$40\,\%$} & \multirow{4}{*}{$\Omega_e^{2h}$} & $\bf 10^{7}$ &  $79.9$ & $(87)$  & / &  $81.2$ &  $(88)$ & $1.1 \cdot 10^4$ & $(9.6 \cdot 10^4)$ & $51.4$ & $(105)$             \\
	                          &                                  & $\bf 10^{6}$ & $119.0$ & $(133)$ & / & $125.5$ & $(136)$ &          $223.9$ &       $(1\,879.4)$ & $34.6$ & $(59)$              \\
	                          &                                  &     $10^{5}$ & $155.1$ & $(172)$ & / & $180.7$ & $(200)$ &           $17.2$ &          $(296.0)$ & $25.5$ & $(33)$              \\
	                          &                                  & $\bf 10^{4}$ & $162.3$ & $(179)$ & / & $190.9$ & $(210)$ &            $6.7$ &           $(29.4)$ & $21.7$ & $(26)$              \\ \cline{2-12}
	                          & $\Omega_e^{5h}$                  &     $10^{5}$ &  $81.3$ & $(94)$  & / & $112.3$ & $(126)$ &           $11.5$ &           $(40.6)$ & $27.3$ & $(34)$              \\
	                          & $\Omega_e^{H}$                   &     $10^{5}$ &  $59.0$ & $(68)$  & / &  $95.2$ & $(116)$ &           $23.3$ &           $(76.9)$ & $32.9$ & $(44)$              \\ \hline
\end{tabular}
\end{center}
\caption{Numerical results for randomly distributed coefficient function as shown in~\cref{fig:random} with $20\,\%$, $30\,\%$, and $40\,\%$ high coefficient elements for $4 \times 4$ subdomains with $H/h = 10$, and $\alpha_{\max} = 10^6$ and $\alpha_{\min} = 1$ using different coarse spaces; in case of the novel coarse spaces $X_{\text{VCDT}-*}$, we vary the size of $\Omega_e$: two layers of finite elements ($\Omega_e^{2h}$), five layers of finite elements ($\Omega_e^{5h}$), or one layer of subdomains ($\Omega_e^{H}$) around $e$; see also~\cref{fig:channels_varying_length_comb}; for the $\Omega_e^{2h}$ case, we vary the threshold $tol_{tr}$. We report the coarse space dimension (final dimension / dimension before POD orthogonalization), the estimated condition number, and the iteration count averaged over $100$ runs (maximum number in parentheses). Non-default tolerances $tol_{tr}$ are marked in \textbf{bold face}.
\label{tab:random}}
\end{table}

In order to validate the theory and show robustness {\AH of our fully algebraic approach ($X_{\text{VCDT}-l^2}$ coarse space)} for general coefficient distributions, we test it on randomly distributed binary coefficient distributions; examples for {\AH coefficient distributions with} $20\,\%$, $30\,\%$, and $40\,\%$ elements with high coefficients are shown in~\cref{fig:random}. 

The results are listed in~\cref{tab:random}, and we can draw several conclusions from those results{\AH: First,} we generally obtain good convergence for our fully algebraic approach for different ratios of high coefficient elements and sizes of $\Omega_e$. As we already observed before, enlarging $\Omega_e$ reduces the coarse space dimension, which is much more pronounced for larger ratios of high coefficient elements; for instance, for $40\,\%$ of high coefficient elements, the coarse space dimension can be reduced from $155.1$ to $59.0$ on average, when keeping the tolerances fixed. 

Of course, enlarging $\Omega_e$ also increases the computational work for setting up both eigenvalue problems. As an alternative, we consider the smallest $\Omega_e = \Omega_e^{2h}$ and vary the threshold for the transfer eigenvalue problem for the tolerance: when increasing $tol_{tr}$ from $10^4$ to $10^6$, the coarse space dimension reduced from $162.3$ to $119.0$. At the same time, the condition number and iteration count increase moderately: the maximum iteration count goes up from $34$ to $44$ and the maximum condition number from $29.4$ to $1\,879.4$. When increasing the tolerance further to $10^7$, we obtain an even smaller coarse space dimension of $79.9$; however, the maximum condition number and iteration count deteriorate to $9.6 \cdot 10^4$ and $105$, respectively.
Obtaining robustness using the fully algebraic coarse space depends on an interplay of the hyper parameters of the method, such as the size of $\Omega_e$ and the tolerances; a full investigation is not possible {\AH here} due to space limitations.

\subsection{SPE10 model problem} \label{sec:results:spe10}

\begin{table}
\begin{center}
\begin{tabular}{|l||l@{\hspace{6pt}}l||r@{\hspace{2pt}}c@{\hspace{2pt}}r|r@{\hspace{6pt}}r|}
	\hline
	$X_0$                                  & $\Omega_{\rm out}$               & $tol_{tr}$   & \multicolumn{3}{c|}{$\dim X_0$} & \multicolumn{1}{c}{$\kappa$} & \multicolumn{1}{c|}{\# its.} \\ \hline\hline
	\multicolumn{8}{|c|}{Original coefficient (without thresholding)}                                                                                                                                                        \\ \hline
	$X_{\text{GDSW}}$                      & --                               & --           &  $85$ & / &                $85$ &                       $20.6$ &                         $42$ \\ \hline\hline
	\multicolumn{8}{|c|}{Binary coefficient (with thresholding)}                                                                                                                                                             \\ \hline
	$X_{\text{GDSW}}$                      & --                               & --           &  $85$ & / &                $85$ &             $2.0 \cdot 10^5$ &                         $57$ \\ \hline
	$X_{\text{AGDSW}}$                     & --                               & --           &  $93$ & / &                $93$ &                       $19.3$ &                         $38$ \\ \hline
	\multirow{5}{*}{$X_{\text{VCDT}-l^2}$} & \multirow{3}{*}{$\Omega_e^{2h}$} & $\bf 10^{7}$ & $147$ & / &               $150$ &                     $1859.0$ &                         $40$ \\
	                                       &                                  & $\bf 10^{6}$ & $262$ & / &               $273$ &                      $122.8$ &                         $37$ \\
	                                       &                                  & $10^{5}$     & $362$ & / &               $417$ &                        $9.3$ &                         $31$ \\ \cline{2-8}
	                                       & $\Omega_e^{5h}$                  & $10^{5}$     & $191$ & / &               $229$ &                        $9.3$ &                         $31$ \\
	                                       & $\Omega_e^{H}$                   & $10^{5}$     & $147$ & / &               $176$ &                        $9.6$ &                         $31$ \\ \hline
	\multirow{3}{*}{$X_{\text{VCD}}$}      & $\Omega_e^{2h}$                  & --           &  $87$ & / &                $89$ &             $2.0 \cdot 10^5$ &                         $57$ \\
	                                       & $\Omega_e^{5h}$                  & --           &  $90$ & / &                $92$ &                       $19.4$ &                         $39$ \\
	                                       & $\Omega_e^{H}$                   & --           &  $90$ & / &                $93$ &                       $19.4$ &                         $39$ \\ \hline
\end{tabular}
\end{center}
\caption{Numerical results for the coefficient functions shown in~\cref{fig:spe10} (with and without thresholding), $6 \times 6$ subdomains with $H/h = 10$ using different coarse spaces; in case of the novel coarse spaces $X_{\text{VCDT}-*}$, we vary the size of $\Omega_e$: two layers of finite elements ($\Omega_e^{2h}$), five layers of finite elements ($\Omega_e^{5h}$), or one layer of subdomains ($\Omega_e^{H}$) around $e$; see also~\cref{fig:channels_varying_length_comb}; for the $\Omega_e^{2h}$ case, we vary the threshold $tol_{tr}$. We report the coarse space dimension (final dimension / dimension before POD orthogonalization), the estimated condition number, and the iteration count. Non-default tolerances are marked in \textbf{bold face}.
\label{tab:spe10}}
\end{table}

\begin{figure}
	\begin{center}
		\includegraphics[width=0.24\textwidth]{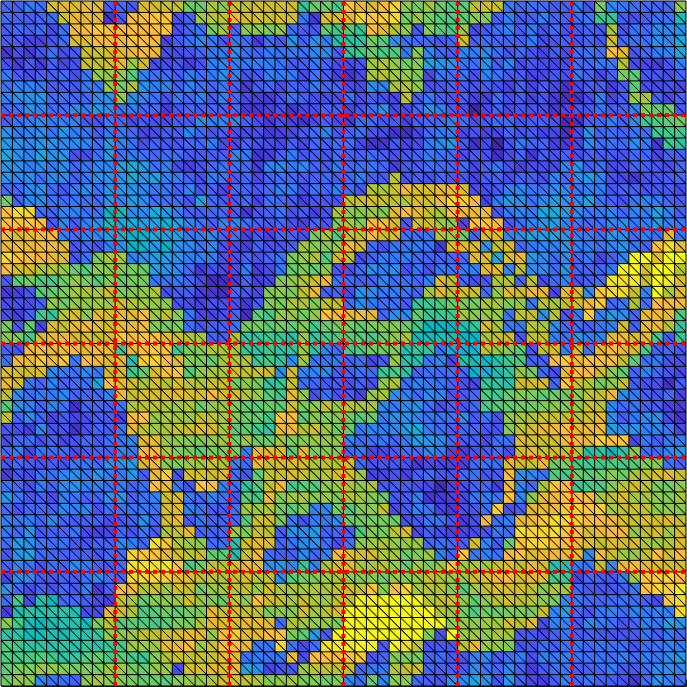}\quad
		\includegraphics[width=0.24\textwidth]{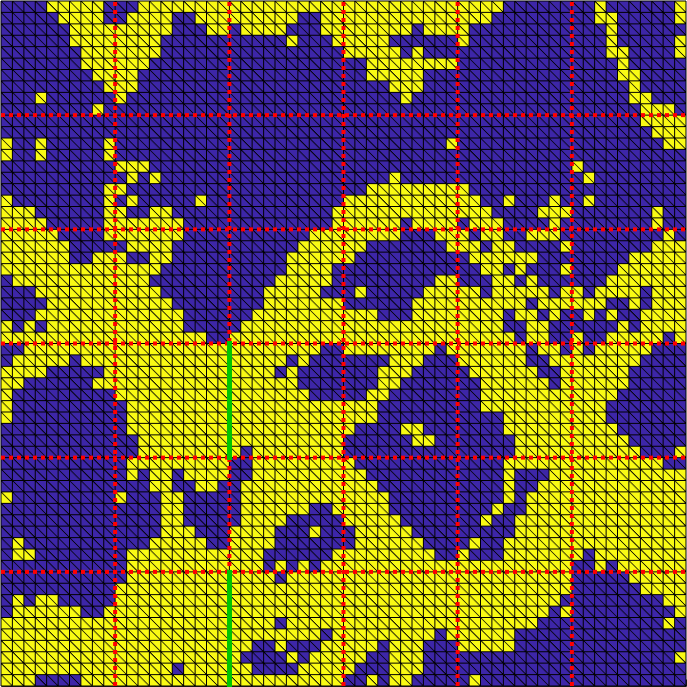}
	\end{center}
	\caption{\lila{Heterogeneous coefficient functions on $6 \times 6$ subdomains with $H/h = 10$; the domain decomposition interface is depicted as dashed red lines. \textbf{Left:} Coefficient values are between $\alpha_{\max} = 2.9\cdot^4$ (yellow) and $\alpha_{\min} = 5.8\cdot^{-3}$ (dark blue). \textbf{Right:} Binary heterogeneous coefficient function created with a threshold of $1$: every coefficient above $1$ is mapped to $\alpha_{\max} = 10^6$, and every coefficient below $1$ is mapped to $\alpha_{\min} = 1$.
			\label{fig:spe10}}}
	
	\vspace{-15pt}
\end{figure}

Finally, we consider a coefficient function based on realistic data. In particular, we use heterogeneous coefficient functions $\alpha$ generated from \lila{parts of} the $40$th layer of the second data set from the 2001 SPE Comparative Solution Project benchmark~\cite{christie2001tenth}, \lila{employing the pixel-wise norm of the permeabilities as the coefficient function.} 
As can be observed in~\cref{tab:spe10} (\textit{``Original coefficient (without thresholding)''}), this example can be solved robustly using the classical GDSW coarse space, and no adaptive coarse space is needed. However, if we convert $\alpha$ into a binary coefficient function by setting all coefficients above $1.0$ to $\alpha_{\max} = 10^6$ and all coefficients below $1.0$ to $\alpha_{\min} = 1$, the classical GDSW coarse space is not robust anymore, resulting in a high condition number of $2.0 \cdot 10^5$.

As expected, the $X_{\text{AGDSW}}$ and $X_{\text{VCDT}-l^2}$ adaptive coarse spaces yield robust results. For small sizes of $\Omega_e$, the dimension of the $X_{\text{VCDT}-l^2}$ coarse space is quite high; for instance, the coarse space dimension is $362$ for $\Omega_e = \Omega_e^{2h}$ (and $tol_{tr} = 10^5$). We observe that, the coarse space dimension reduces significantly when increasing $tol_{tr}$; for $tol_{tr} = 10^7$, the dimension is only $147$. The same dimension is obtained for $\Omega_e = \Omega_e^H$. While enlarging $\Omega_e$ results in a better condition number and iteration count, it also increases the computational cost for setting up the eigenvalue problems. On the other hand, increasing $tol_{tr}$ does not increase the computational cost; however, the condition number and iteration count grow moderately.

For this example, we also report results for the space $X_{\text{VCD}}$, where the transfer eigenvalue problem is completely neglected. While the condition number is contrast dependent for $\Omega_e = \Omega_e^{2h}$, which shows that the transfer eigenvalue problem is necessary in this case, we obtain good results for $\Omega_e = \Omega_e^{5h}$ and $\Omega_e = \Omega_e^{H}$, and the dimension of the coarse space is even lower than the dimension of $X_{\text{AGDSW}}$. Note that for oversampling domains where the coefficient function is high nearly everywhere, we conjecture a slower decay of the harmonic extensions of higher frequency modes on $\pOe$. This results in a relatively large {\AH $X_{\text{VCT}-l^2}$} space on $e$; this is the case, for  example, for the green {\AH vertical} edges in~\cref{fig:spe10}.

These results show that our fully algebraic approach is very robust, but further investigations of choosing the thresholds will be necessary to obtain the optimal coarse space dimension.

\FloatBarrier

\bibliographystyle{siamplain}
%\bibliography{bib}

\end{document}